\theoremstyle{plain}
\theoremstyle{plain}
\newtheorem{theorem}{Theorem} [section]
\newtheorem{lemma}[theorem]{Lemma}
\newtheorem{proposition}[theorem]{Proposition}
\theoremstyle{definition}
\newtheorem{definition}[theorem]{Definition}
\theoremstyle{remark}
\numberwithin{theorem}{section}
\numberwithin{equation}{section}
\numberwithin{figure}{section}
\def\mean#1{\mathchoice
         {\mathop{\kern 0.2em\vrule width 0.6em height 0.69678ex depth -0.58065ex
                 \kern -0.8em \intop}\nolimits_{\kern -0.4em#1}}%
         {\mathop{\kern 0.1em\vrule width 0.5em height 0.69678ex depth -0.60387ex
                 \kern -0.6em \intop}\nolimits_{#1}}%
         {\mathop{\kern 0.1em\vrule width 0.5em height 0.69678ex
             depth -0.60387ex
                 \kern -0.6em \intop}\nolimits_{#1}}%
         {\mathop{\kern 0.1em\vrule width 0.5em height 0.69678ex depth -0.60387ex
                 \kern -0.6em \intop}\nolimits_{#1}}}
\def\N{\mathbb N}
\def\Z{\mathbb Z}
\def\R{\mathbb R}
\def\T{\mathbb T}
\def\L{\mathscr L}
\def\l{\lambda}
\newcommand{\re}{\mathbb{R}}
\newcommand{\n}{\mathbb{N}}
\newcommand{\z}{\mathbb{Z}}
\newcommand{\negint}{{\int\negthickspace\negthickspace\negthickspace-}}
\begin{document}

\title[Existence of Eulerian solutions of the semigeostrophic equations]{Existence of Eulerian solutions\\
to the semigeostrophic equations in physical space:\\ the 2-dimensional periodic case}

\author[L.\ Ambrosio]{Luigi Ambrosio}
\address{Scuola Normale Superiore,
p.za dei Cavalieri 7, I-56126 Pisa, Italy}
\email{l.ambrosio@sns.it}

\author[M.\ Colombo]{Maria Colombo}
\address{Scuola Normale Superiore,
p.za dei Cavalieri 7, I-56126 Pisa, Italy}
\email{maria.colombo@sns.it}

\author[G.\ De Philippis]{Guido De Philippis}
\address{Scuola Normale Superiore,
p.za dei Cavalieri 7, I-56126 Pisa, Italy}
\email{guido.dephilippis@sns.it}

\author[A.\ Figalli]{Alessio Figalli}
\address{Department of Mathematics,
The University of Texas at Austin, 1 University Station C1200,
Austin TX 78712, USA}
\email{figalli@math.utexas.edu}

%

\begin{abstract}
In this paper we use new regularity and stability
estimates for Alexandrov solutions to Monge-Amp\`ere equations,
recently estabilished by De Philippis and Figalli
\cite{DepFi}, to provide global in
time existence of distributional solutions to the semigeostrophic equations
on the 2-dimensional torus, under very mild assumptions on
the initial data. A link with Lagrangian solutions is also discussed.
\end{abstract}

\maketitle

\section{Introduction}
The semigeostrophic equations are a simple model used
in meteorology to describe large scale atmospheric flows.
As explained for instance in \cite[Section 2.2]{bebr} and \cite[Section 1.1]{Loe2}
(see also \cite{cu} for a more complete exposition),
the semigeostrophic equations can be derived from the 3-d incompressible
Euler equations, with Boussinesq and hydrostatic approximations, subject to a strong
Coriolis force.
Since for large scale atmospheric flows the Coriolis force dominates
the advection term, the flow is mostly bi-dimensional. For this reason,
the study of the semigeostrophic equations in 2-d or 3-d is pretty similar,
and in order to simplify our presentation we focus here on the 2-dimentional periodic case,
though we expect that our results could be extended to three dimensions.

The semigeostrophic system on the $2$-dimensional torus $\T^2$ is given by
\begin{equation}\label{eqn:SGsystem1}
\begin{cases}
\partial_t u^g_t(x) +\bigl(u_t(x) \cdot \nabla\bigr) u^g_t(x) + \nabla p_t(x) = -J u_t(x)
\quad\quad &(x,t)\in \T^2 \times (0,\infty)\\
u^g_t(x) = J \nabla p_t(x) & (x,t)\in \T^2 \times [0,\infty)\\
\nabla \cdot u_t(x) = 0  &(x,t)\in \T^2 \times [0,\infty)\\
p_0(x)= p^0(x)   &x\in \T^2.
\end{cases}
\end{equation}
Here $p^0$ is the initial datum, $J$ is the rotation matrix given by
\[
J:=
\begin{pmatrix}
0  &  -1   \\
1  &  0 \\
\end{pmatrix},
\]
and the functions $u_t$ and $p_t$ represent respectively the
\emph{velocity} and the \emph{pressure}, while $u^g_t$ is the
so-called \emph{semi-geostrophic} wind.\footnote{Note that we are
using the notation $u_t,\  p_t,\ u_t^g$ to denote the functions
$u(t,\cdot),\ p(t,\cdot),\ u^g(t,\cdot)$} Clearly the pressure is
defined up to a (time-dependent) additive constant. In the sequel we
are going to identify functions (and measures) defined on the torus
$\T^2$ with $\Z^2$-periodic functions defined on $\R^2$.

Substituting the relation $u^g_t = J \nabla p_t$
into the equation, the system \eqref{eqn:SGsystem1} can be rewritten
as
\begin{equation}\label{eqn:SGsystem2}
\begin{cases}
\partial_t J\nabla p_t + J \nabla^2p_t u_t +\nabla p_t +Ju_t = 0\\
\nabla \cdot u_t = 0 \\
p_0= p^0
\end{cases}
\end{equation}
with $u_t$ and $p_t$ periodic.

Energetic considerations (see \cite[Section 3.2]{cu}) show that it
is natural to assume that $p_t$ is ($-1$)-convex, i.e., the function
$P_t(x):=p_t(x)+|x|^2/2$ is convex on $\R^2$. If we denote with
$\L_{\T^2}$ the (normalized) Lebesgue measure on the torus, then
formally\footnote{Given a measure $\mu$ on $\T^2$ and a Borel map
$f:\T^2\to\T^2$, we define the measure $f_\sharp \mu $ through the
relation \[\int_{\T^2} h(y) \,d \, f_\sharp \mu(y)=\int_{\T^2}
h(f(x))\,d\mu(x)\]} $\rho_t:=(\nabla P_t)_\sharp \L_{\T^2}$ \
satisfies the following \emph{dual problem} (see the Appendix):

\begin{equation}\label{eqn:dualsystem}
\begin{cases}
\partial_t \rho_t +\nabla \cdot (U_t \rho_t) = 0 \\
U_t(x) = J(x-\nabla P_t^*(x))\\
\rho_t= (\nabla P_t)_{\sharp} \L_{\T^2}\\
P_0(x)= p^0(x)+|x|^2/2.
\end{cases}
\end{equation}
Here $P^*_t$ is the convex conjugate of $P_t$, namely
\[
P_t^*(y):=\sup_{x\in\R^2} (y \cdot x-P_t(x)).
\]
Notice that, since $P_t(x)-|x|^2/2$ is periodic,
\begin{equation}\label{eq:periodoP_t}
\nabla P_t(x+h)=\nabla P_t (x)+h\qquad\forall
x\in\R^2,\,\,\,h\in\z^2.
\end{equation}
Hence $\nabla P_t$ can be viewed as a map from $\T^2$ to $\T^2$ and
$\rho_t$ is a well defined measure on $\T^2$. One can also verify
easily that the inverse map $\nabla P_t^*$ satisfies
\eqref{eq:periodoP_t} as well. Accordingly, we shall understand
\eqref{eqn:dualsystem} as a PDE on $\T^2$, i.e., using
test functions which are $\Z^2$-periodic in space.

The dual problem \eqref{eqn:dualsystem} is nowadays pretty well
understood. In particular, Benamou and Brenier proved in
\cite{bebr} existence of weak solutions to \eqref{eqn:dualsystem},
see Theorem~\ref{thm:dualeq} below. On the contrary, much less is
known about the original system \eqref{eqn:SGsystem1}. Formally,
given a solution $\rho_t$ of \eqref{eqn:dualsystem} and defining
$P_t^*$ through the relation $\rho_t= (\nabla P_t)_\sharp\L_{\T^2}$
(namely the optimal transport map from $\rho_t$ to $\L_{\T^2}$, see
Theorem~\ref{thm:transport-torus}) the pair $(p_t,u_t)$ given
by\footnote{Because of the many compositions involved in this paper,
we use the notation $[\partial_t f](g)$
(resp. $[\nabla f](g)$) to denote the composition $(\partial_t
f)\circ g$ (resp. $(\nabla f)\circ g$), avoiding the ambiguous
notation $\partial_t f(g)$ (resp. $\nabla f(g)$)}
\begin{equation}\label{eqn:velocity}
\begin{cases}
p_t(x):=P_t(x)-|x|^2/2& \cr u_t(x):= [\partial_t\nabla P_t^*](\nabla
P_t(x)) + [\nabla^2 P_t^*]( \nabla P_t(x)) J(\nabla P_t(x) - x)&
\end{cases}
\end{equation}
solves \eqref{eqn:SGsystem2}. However, being $P^*_t$ just a convex
function, \emph{a priori} $\nabla^2 P_t^*$ is just a matrix-valued measure,
thus as pointed out in \cite{cufe} it is not clear the meaning to
give to the previous equation.

In this paper we prove that \eqref{eqn:velocity} is a well defined
velocity field, and that the couple $(p_t,u_t)$ is a solution of
(\ref{eqn:SGsystem1}) in a distributional sense. In order to carry
out our analysis, a fundamental tool is a recent result for
solutions of the Monge-Amp\`ere equation, proved by the third and
fourth author in \cite{DepFi}, showing $L\log^kL$ regularity on
$\nabla^2 P_t^*$ (see Theorem \ref{thm:transport-torus reg}(ii)
below).

Thanks to this result, we can easily show that the second term
appearing in the definition of the velocity $u_t$ in
\eqref{eqn:velocity} is a well defined $L^1$ function (see the proof
of Theorem \ref{thm:main}). Moreover, following some ideas developed
in \cite{Loe1} we can show that the first term is also $L^1$, thus
giving a meaning to $u_t$ (see Proposition~\ref{prop:est}). At this
point we can prove that the pair $(p_t,u_t)$ is actually a
distributional solution of system \eqref{eqn:SGsystem2}. Let us
recall, following \cite{cufe}, the proper definition of \emph{weak
Eulerian solution} of \eqref{eqn:SGsystem2}.

\begin{definition}\label{sg-weak-eul}
Let $p:\T^2\times (0,\infty)\to\R$ and $u:\T^2\times
(0,\infty)\to\R^2$. We say that $(p,u)$ is a \emph{weak Eulerian
solution} of \eqref{eqn:SGsystem2} if:
\begin{enumerate}
\item[-] $|u|\in L^\infty((0,\infty),L^1(\T^2))$,
$p\in L^\infty ((0,\infty),W^{1,\infty}(\T^2))$, and
$p_t(x)+|x|^2/2$ is convex for any $t \geq 0$;
\item[-] For every
$\phi\in C^\infty_c(\T^2\times [0,\infty))$, it holds
\begin{multline}\label{eqn:sg1-weak}
 \int_0^\infty\int_{\T^2}
 J\nabla p_t(x) \Big\{\partial_t \phi_t(x) + u_t(x)\cdot \nabla
 \phi_t(x)\Big\}-\Big\{\nabla p_t(x)+Ju_t(x) \Big\} \phi_t(x)\, dx \, dt\\
  +\int_{\T^2} J \nabla p_0(x) \phi_0(x) \, dx = 0;
\end{multline}
\item[-] For a.e. $t\in (0,\infty)$ it holds
\begin{equation}\label{eqn:sg2-weak}
\int_{\T^2 } \nabla \psi(x)\cdot u_t(x) \, dx =0 \qquad \mbox{for
all $\psi\in C^\infty(\T^2)$.}
\end{equation}
\end{enumerate}
\end{definition}

We can now state our main result.

\begin{theorem}\label{thm:main}
Let $p_0 : \re^2 \to \re$ be a $\z^2$-periodic function such that
$p_0(x)+ |x|^2/2$ is convex, and assume that the measure $
(Id+\nabla p_0 )_\sharp \L^2$ is absolutely continuous with
respect to $\L^2$ with density $\rho_0$, namely
\[
(Id+\nabla p_0)_\sharp\L^2 =\rho_0\L^2.
\]
Moreover, let us assume that both $\rho_0$ and $1/\rho_0$ belong to
$L^\infty(\R^2)$.

Let $\rho_t$ be the solution of \eqref{eqn:dualsystem} given by
Theorem~\ref{thm:dualeq} and let $P_t:\R^2\to \R$ be the (unique up
to an additive constant) convex function such that $(\nabla
P_t)_\sharp\L^2=\rho_t\L^2$ and $P_t(x)-|x|^2/2$ is $\Z^2$-periodic,
$P_t^*:\R^2\to\R$ its convex conjugate.

Then the couple $(p_t,u_t)$ defined in \eqref{eqn:velocity} is a weak Eulerian solution of
\eqref{eqn:SGsystem2}, in the sense of
Definition~\ref{sg-weak-eul}.
\end{theorem}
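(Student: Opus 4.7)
The proof splits into three tasks: showing that $u_t$ is a well-defined vector field in $L^\infty_t L^1_x$, verifying the incompressibility \eqref{eqn:sg2-weak}, and verifying the main weak equation \eqref{eqn:sg1-weak}. The underlying strategy is to translate all required Eulerian identities into identities in the dual (Monge--Amp\`ere) variable $y$, where the weak form of the continuity equation in \eqref{eqn:dualsystem} can be used, performing the transition via the change of variables $y=\nabla P_t(x)$ together with $(\nabla P_t)_\sharp\L^2 = \rho_t\L^2$.

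First I would establish the integrability of $u_t$. For the term $[\nabla^2 P_t^*](\nabla P_t(x))\,J(\nabla P_t(x)-x)$, the change of variables above gives
\[
\int_{\T^2}\bigl|[\nabla^2 P_t^*](\nabla P_t(x))\,J(\nabla P_t(x)-x)\bigr|\,dx=\int_{\T^2}|\nabla^2 P_t^*(y)|\,|y-\nabla P_t^*(y)|\,\rho_t(y)\,dy,
\]
which is finite because $\rho_t\in L^\infty$ uniformly in $t$ (Theorem~\ref{thm:dualeq}), $|y-\nabla P_t^*(y)|$ is bounded as the gradient of a $\Z^2$-periodic function, and $\nabla^2 P_t^*\in L\log^k L$ by Theorem~\ref{thm:transport-torus reg}(ii). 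The term $[\partial_t \nabla P_t^*](\nabla P_t(x))$ is more delicate; following Loeper~\cite{Loe1}, one formally differentiates $(\nabla P_t^*)_\sharp\rho_t=\L^2$ against the dual continuity equation to deduce $\partial_t\nabla P_t^*=-\nabla^2 P_t^*\,U_t$ (up to a divergence-free correction), and then transfers integrability from the first term. This is the content of Proposition~\ref{prop:est}.

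For the incompressibility I would test \eqref{eqn:dualsystem} against $\eta(t,y)=\chi(t)\,\psi(\nabla P_t^*(y))$ with $\chi\in C_c^\infty((0,\infty))$ and $\psi\in C^\infty(\T^2)$. Since $\int\psi(\nabla P_t^*)\,d\rho_t=\int\psi\,dx$ is independent of $t$, the $\chi'$ contributions cancel against the initial-time term, leaving
\[
\int_{\T^2}\nabla\psi(\nabla P_t^*(y))\cdot\bigl[\partial_t\nabla P_t^*(y)+\nabla^2 P_t^*(y)\,U_t(y)\bigr]\rho_t(y)\,dy=0\qquad\text{for a.e. }t.
\]
Changing variables back via $x=\nabla P_t^*(y)$ turns this exactly into $\int\nabla\psi(x)\cdot u_t(x)\,dx=0$, which is \eqref{eqn:sg2-weak}. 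Since $\nabla P_t^*$ is only convex, the chosen $\eta$ is not a priori a legal smooth test function; one approximates $\nabla P_t^*$ by convolution and passes to the limit using the $L^1$ bounds of the previous step.

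For the main equation, multiplying the first equation of \eqref{eqn:SGsystem2} by $-J$ and substituting $\nabla p_t=\nabla P_t-x$, $\nabla^2 p_t=\nabla^2 P_t-I$ reduces it to $\partial_t\nabla P_t(x)+\nabla^2 P_t(x)\,u_t(x)=J(\nabla P_t(x)-x)$; plugging in \eqref{eqn:velocity} and using the a.e. identity $\nabla^2 P_t(x)\cdot[\nabla^2 P_t^*](\nabla P_t(x))=I$ reduces this in turn to $\partial_t\nabla P_t+\nabla^2 P_t\,[\partial_t\nabla P_t^*](\nabla P_t)=0$, which is simply the chain rule $\partial_t[\nabla P_t^*\circ\nabla P_t]=\partial_t(\mathrm{id})=0$. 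To promote this formal computation to the distributional identity \eqref{eqn:sg1-weak}, I would test \eqref{eqn:dualsystem} against vector-valued functions built from the Eulerian test $\phi(t,x)$ composed with $\nabla P_t^*$, the natural choice being $\eta(t,y)=J(y-\nabla P_t^*(y))\,\phi(t,\nabla P_t^*(y))$ (noting that $J\nabla p_t\circ\nabla P_t^*=U_t$), so that the change of variables $y=\nabla P_t(x)$ converts the dual weak identity into \eqref{eqn:sg1-weak} modulo terms that vanish by the identities above. The principal obstacle is justifying the chain rule and the matrix identity $\nabla^2 P_t\cdot[\nabla^2 P_t^*]\circ\nabla P_t=I$ in the nonsmooth setting, as well as the composition $[\partial_t\nabla P_t^*]\circ\nabla P_t$; this is precisely where the $L\log^k L$ regularity of Theorem~\ref{thm:transport-torus reg}(ii) is indispensable, as it allows each composition to be given pointwise meaning and each limit in the approximation to be passed through.
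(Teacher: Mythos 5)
Your proposal follows the paper's proof essentially verbatim: establish $|u|\in L^\infty_{\rm loc}([0,\infty),L^1(\T^2))$ via the $L\log^k L$ bound of Theorem~\ref{thm:transport-torus reg}(ii) and Proposition~\ref{prop:est}, then test the weak dual equation \eqref{eqn:sg-dual-weak} with $\varphi_t(y)=\phi(t)\psi(\nabla P_t^*(y))$ (for incompressibility) and with $\varphi_t(y)=J\bigl(y-\nabla P_t^*(y)\bigr)\phi_t(\nabla P_t^*(y))$ (for the main identity), changing variables via $(\nabla P_t)_\sharp\L_{\T^2}=\rho_t\L_{\T^2}$; the admissibility of these $W^{1,1}$ test functions is supplied in the paper by Lemma~\ref{rmk:scontr}. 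One small correction to your final paragraph: the paper's argument never needs to justify the nonsmooth chain rule $\partial_t(\nabla P_t^*\circ\nabla P_t)=0$ nor the matrix identity $\nabla^2 P_t\cdot[\nabla^2 P_t^*](\nabla P_t)=Id$ --- those only motivate the choice of test function, while the actual computation, carried out entirely in the dual variable, involves only $\nabla^2 P_t^*$ and $\partial_t\nabla P_t^*$ composed with $\nabla P_t$ (compositions that are well defined a.e.\ simply because $(\nabla P_t)_\sharp\L_{\T^2}\ll\L_{\T^2}$), so no inverse Hessian or chain-rule justification is required; the $L\log^k L$ regularity and Proposition~\ref{prop:est} enter precisely to guarantee $u\in L^1$ and $\varphi\in W^{1,1}$.
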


Although the vector field $u$ provided by the previous theorem is only $L^1$,
as explained in Section \ref{sect:RLF} we can associate to it a measure-preserving Lagrangian flow.
In particular we recover (in the particular case of the
$2$-dimensional periodic setting) the result of Cullen and Feldman
\cite{cufe} on the existence of Lagrangian solutions to the semigeostrophic equations in physical space.

The paper is structured as follows: in Section \ref{sect:MA}
we recall some preliminary results on optimal transport maps on the torus and their regularity.
Then, in Section \ref{sect:dual} we state the existence result of Benamou and Brenier
for solutions to the dual problem \eqref{eqn:dualsystem},
and we show some important regularity estimates on such solutions,
which are used in Section \ref{sect:proof thm}
 to prove Theorem \ref{thm:main}.
 In Section \ref{sect:RLF} we prove the existence of a ``Regular Lagrangian Flow''
 associated to the vector field $u$ provided by Theorem \ref{thm:main}.
 Finally, in Section \ref{sect:open pbs} we list some open problems.
 For completenes, in the Appendix we show the formal computation used to obtain \eqref{eqn:dualsystem}
 from \eqref{eqn:SGsystem2}.

\smallskip
\noindent {\bf Acknowledgement.} L.A., G.D.P., and A.F.
acknowledge the support of the ERC ADG GeMeThNES.
A.F. was also supported by the NSF Grant DMS-0969962.

\section{Optimal transport maps on the torus and their regularity}
\label{sect:MA}

The following theorem can be found in \cite{Cor} (see for instance \cite[Section 2]{DepFi} for the
notion of Alexandrov solution of the Monge-Amp\`ere equation).

\begin{theorem}[Existence of optimal maps on $\T^2$]\label{thm:transport-torus}
Let $\mu$ and $\nu$ be $\z^2$-periodic Radon measures on $\re^2$
such that $\mu([0,1)^2)=\nu([0,1)^2)=1$ and $ \mu=\rho \L^2$ with
$\rho>0$ almost everywhere. Then there exists a unique (up to an
additive constant) convex function $P:\re^2\to\re$ such that
$(\nabla P)_\sharp \mu = \nu$ and $P-|x|^2/2$ is $\Z^2$-periodic.
Moreover
\begin{equation}
\nabla P(x+h) = \nabla P(x) +h \quad\quad {\rm for \,a.e.}\,x\in\re^2,\,\,\,
\forall \, h\in \z^2,\label{ts:transp-per}
\end{equation}
\begin{equation}\label{diam-toro}
| \nabla P(x)-x |\leq {\rm diam}(\T^2) =
\frac{\sqrt{2}}{2}\qquad{\rm for \,a.e.}\, x\in\R^2.
\end{equation}
In addition, if $\mu=\rho\L^2$, $\nu=\sigma\L^2$, and there exist
constants $0<\lambda\leq\Lambda<\infty$ such that $\lambda\leq
\rho,\sigma\leq\Lambda$, then $P$ is a strictly convex
Alexandrov solution of
$$\det\nabla^2 P(x) = f(x),\qquad\text{with }f(x)=\frac{\rho(x)}{\sigma(\nabla P(x))}.$$
\end{theorem}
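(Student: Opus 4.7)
The plan is to formulate the existence claim as an optimal transport problem on the compact torus $\T^2$ with quadratic cost $c(x,y):=\tfrac12\inf_{h\in\Z^2}|x-y-h|^2$ (the squared Riemannian distance on the flat torus), and to lift the Kantorovich potentials to a globally convex function on $\R^2$. First I would descend $\mu$ and $\nu$ to Borel probability measures on $\T^2$ and apply Kantorovich duality to the bounded continuous cost $c$, producing a pair of $c$-concave potentials $(\phi,\phi^c)$ on $\T^2$. Since $\mu$ is absolutely continuous, $\phi$ is differentiable $\mu$-a.e., and a standard first-variation argument identifies the optimal transport map as $T(x)=x-\nabla\phi(x)\pmod{\Z^2}$, pushing $\mu$ forward to $\nu$.

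The key step, which I expect to be the main obstacle, is to produce from $\phi$ a \emph{globally convex} $P\colon\R^2\to\R$ with $P-|x|^2/2$ being $\Z^2$-periodic and with $\nabla P$ realising the canonical lift of $T$. The right trick is to unfold $\phi^c$ to all of $\R^2$ rather than to try to convexify $\phi$ directly: writing the dual constraint $\phi(x)+\phi^c(y)\le c(x,y)$ in the form $\phi(x)+\phi^c(y)\le\tfrac12|x-\tilde y|^2$ for every lift $\tilde y\in\R^2$ of $y$ and setting $\tilde\phi^c(\tilde y):=\phi^c(\tilde y\bmod\Z^2)-\tfrac12|\tilde y|^2$, one obtains
\[
\tfrac12|x|^2-\phi(x)\ge\sup_{\tilde y\in\R^2}\bigl(x\cdot\tilde y+\tilde\phi^c(\tilde y)\bigr)=:P(x).
\]
By construction $P$ is convex on $\R^2$ as a supremum of affine functions. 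The reverse inequality comes from plugging in $\tilde y^\ast:=x-\nabla\phi(x)$, which is the closest lift of $T(x)$ to $x$, and invoking the duality equality on the support of the optimal plan; hence $P(x)=\tfrac12|x|^2-\phi(x)$ $\mu$-a.e., so $P-|x|^2/2=-\phi$ is $\Z^2$-periodic.

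Once $P$ is constructed, the remaining conclusions follow readily. Differentiating the periodicity of $P-|x|^2/2$ yields \eqref{ts:transp-per}, and $(\nabla P)_\sharp\mu=\nu$ since $\nabla P=\tilde y^\ast=T$ modulo $\Z^2$. Uniqueness of $P$ up to additive constants is the standard uniqueness of the Brenier potential for absolutely continuous source measures. For \eqref{diam-toro}, the identification $\nabla P(x)=\tilde y^\ast$ with $\tilde y^\ast$ the closest lift of $T(x)$ to $x$ forces $|\nabla P(x)-x|\le\operatorname{diam}(\T^2)=\sqrt 2/2$. Finally, under the two-sided bound $\lambda\le\rho,\sigma\le\Lambda$, the push-forward identity $(\nabla P)_\sharp(\rho\L^2)=\sigma\L^2$ combined with the area formula gives $\det\nabla^2 P=\rho/(\sigma\circ\nabla P)$ in the Alexandrov sense, and strict convexity of $P$ follows from Caffarelli's classical regularity theorem for Monge-Amp\`ere equations with densities bounded between positive constants.
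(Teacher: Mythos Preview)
Your existence argument is essentially Cordero-Erausquin's construction, which is what the paper cites rather than reproves; it is correct, and in fact the identity $P(x)=\tfrac12|x|^2-\phi(x)$ holds for \emph{every} $x$, not merely $\mu$-a.e., since $c$-concavity of $\phi$ gives $\phi(x)=\inf_{\tilde y\in\R^2}\bigl(\tfrac12|x-\tilde y|^2-\phi^c(\tilde y\bmod\Z^2)\bigr)$, which rearranges exactly to your formula for $P$. This removes any worry about deducing periodicity of $P-|x|^2/2$ from an a.e.\ statement.

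The genuine gap is in uniqueness. ``Standard Brenier uniqueness'' does not apply: the measures $\mu,\nu$ are $\Z^2$-periodic and not integrable on $\R^2$, so there is no global Kantorovich problem on $\R^2$ singling out $\nabla P$. What must be shown is that \emph{any} convex $Q$ with $Q-|x|^2/2$ periodic and $(\nabla Q)_\sharp\mu=\nu$ descends to the (unique) optimal map on $\T^2$; this is the converse of your lifting construction, and it is precisely what the paper proves. The key observation is that $q^*:=Q^*-|y|^2/2$ is automatically $\Z^2$-periodic, and then
\[
q(x):=Q(x)-\tfrac12|x|^2=\sup_{y\in\R^2}\Bigl(-\tfrac12|y-x|^2-q^*(y)\Bigr)=\sup_{y\in\T^2}\Bigl(-\tfrac12 d_{\T^2}^2(x,y)-q^*(y)\Bigr),
\]
so $q$ is $d_{\T^2}^2$-convex and $\nabla Q=Id+\nabla q$ is the McCann map on $\T^2$. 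Uniqueness of the latter (using $\rho>0$ a.e.) then forces $\nabla Q=\nabla P$ a.e., hence $Q-P$ is constant. Without this step your argument only shows that \emph{some} $P$ with the required properties exists, not that it is the only one.
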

\begin{proof}Existence of $P$ follows from \cite{Cor}. To prove uniqueness we observe that, under our assumption,
also $p^*(y):=P^*(y)-|y|^2/2$ is $\Z^2$-periodic. Hence,
since
$$
P(x)=\sup_{y\in \R^2} x \cdot y - P^*(y),
$$
we get that the function $p(x):=P(x)-|x|^2/2 $ satisfies
\[
\begin{split}
p(x)&=\sup_{y\in \R^2}\Big(-\frac{|y-x|^2}{2}-P^*(y)+\frac{|y|^2}{2}\Big)\\
&=\sup_{y\in[0,1|^2}\sup_{h \in \Z^2}\Big(-\frac{|y+h-x|^2}{2}-p^*(y+h)\Big)\\
&=\sup_{y\in \T^2}\Big(-\frac{d^2_{\T^2}(x,y)}{2}-p^*(y)\Big),
\end{split}
\]
where $d_{\T^2}$ is the quotient distance on the torus, and we used
that $p^*(y)$ is $\Z^2$-periodic. This means that the function $p$
is $d_{\T^2}^2$-convex, and that $p^*$ is its $d^2_{\T^2}$-transform
(see \cite[Chapter 5]{Vil}). Hence $\nabla P=Id+\nabla p:\T^2 \to
\T^2$ is the unique ($\mu$-a.e.) optimal transport map sending $\mu$
onto $\nu$ (\cite[Theorem 9]{Mc}), and since $\rho>0$ almost
everywhere this uniquely characterizes $P$ up to an additive
constant. Finally, all the other properties of $P$ follow from
\cite{Cor}.
\end{proof}

Combining the previous theorem and the known regularity
results for strictly convex Alexandrov solutions of the
Monge-Amp\`ere equation (see \cite{CA1,CAW2p,CA2,Cor,DepFi,GT}) we
have the following:

\begin{theorem}[Space regularity of optimal maps on $\T^2$]\label{thm:transport-torus reg}
Let $\mu=\rho\L^2$, $\nu=\sigma\L^2$, and let $P$ be as in
Theorem~\ref{thm:transport-torus} with $\int_{\T^2}P\,dx=0$. Then:
\begin{enumerate}
\item[(i)] $P\in C^{1,\beta}(\T^2)$ for some $\beta=\beta(\lambda,\Lambda)\in (0,1)$, and there exists
a constant
$C=C(\lambda,\Lambda)$ such that
$$\| P\|_{C^{1,\beta}}\leq C.$$
\item[(ii)] $P\in W^{2,1}(\T^2)$, and for any $k \in \N$ there exists a constant $C=C(\l,\Lambda,k)$ such that
$$
\int_{\T^2}|\nabla^2 P| \log_+^k |\nabla^2 P | \,dx \leq C.
$$
\item[(iii)] If $\rho,\,\sigma\in C^{k,\alpha}(\T^2)$ for some $k\in\n$ and $\alpha\in(0,1)$, then
$P\in C^{k+2,\alpha}(\T^2)$ and there exists a constant
$C=C(\lambda,\Lambda,\|\rho\|_{C^{k,\alpha}},
\|\sigma\|_{C^{k,\alpha}}  )$  such that
$$\| P\|_{C^{k+2,\alpha}}\leq C.$$
Moreover, there exist two positive constants $c_1$ and $c_2$,
depending only on $\lambda$, $\Lambda$, $\|\rho\|_{C^{0,\alpha}}$,
and $\|\sigma\|_{C^{0,\alpha}}$, such that
$$c_1 Id \leq \nabla^2  {P}(x) \leq c_2 Id\qquad\forall \,x\in\T^2.$$
\end{enumerate}
\end{theorem}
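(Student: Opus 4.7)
The plan is to reduce each of the three claims to an \emph{interior} regularity statement for a strictly convex Alexandrov solution of the Monge-Amp\`ere equation on a ball in $\R^2$, and then use the periodicity of $p := P - |x|^2/2$ to pass from the local to the global estimate on $\T^2$. By Theorem~\ref{thm:transport-torus}, $P$ is a strictly convex Alexandrov solution of $\det\nabla^2 P = f$ on $\R^2$ with $\lambda/\Lambda \le f \le \Lambda/\lambda$; moreover, \eqref{diam-toro} combined with the periodicity of $p$ tells us that $P$ differs from $|x|^2/2$ by a bounded $\Z^2$-periodic function. A crucial preliminary observation is that these two facts together give a \emph{universal} modulus of strict convexity for $P$, depending only on $\lambda$ and $\Lambda$: since $\nabla P$ stays within distance $\sqrt{2}/2$ of the identity, the Monge-Amp\`ere sections of $P$ at every point in $[0,1)^2$ are comparable to Euclidean balls with constants depending only on $\lambda, \Lambda$.

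For (i), I would apply Caffarelli's $C^{1,\beta}$ regularity theorem for strictly convex Alexandrov solutions whose Monge-Amp\`ere measure is pinched between two positive constants. Since the strict convexity modulus is universal, the exponent $\beta$ and the constant in the local $C^{1,\beta}(B_1)$ estimate depend only on $\lambda$ and $\Lambda$, and the periodicity of $p$ together with \eqref{ts:transp-per} immediately upgrades this to the global bound on $\T^2$. For (ii), this is precisely the theorem proved in \cite{DepFi} applied on a fixed ball in $\R^2$: under the same assumptions, $\nabla^2 P \in L\log_+^k L$ locally, with a constant depending only on $\lambda, \Lambda, k$. Once again periodicity yields the estimate on $\T^2$.

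For (iii), the plan is a bootstrap argument. From (i), $\nabla P$ is H\"older continuous, so when $\rho, \sigma \in C^{k,\alpha}$ the right-hand side $f = \rho/(\sigma\circ\nabla P)$ inherits some H\"older regularity. Caffarelli's $C^{2,\alpha}$ theorem applied to the strictly convex Alexandrov solution $P$ with H\"older continuous right-hand side then gives $P \in C^{2,\alpha'}$ for some $\alpha'>0$; iterating this step, and at the higher levels differentiating the equation and invoking Schauder theory for the linearized Monge-Amp\`ere operator with coefficients $\mathrm{cof}(\nabla^2 P)$ (elliptic by the lower bound on $f$), one reaches $P \in C^{k+2,\alpha}$ with the stated dependence of the constant. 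Finally, once $P \in C^2(\T^2)$ is known, the identity $\det\nabla^2 P = f \in [\lambda/\Lambda, \Lambda/\lambda]$ combined with the $C^{0,\alpha}$ bound on $\nabla^2 P$ and the compactness of $\T^2$ forces the eigenvalues of $\nabla^2 P$ to be uniformly bounded away from $0$ and $\infty$, which is exactly $c_1 \mathrm{Id}\le \nabla^2 P\le c_2 \mathrm{Id}$.

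The main obstacle I foresee is ensuring that the strict convexity modulus of $P$ (and hence all the constants coming from Caffarelli's and De Philippis-Figalli's theorems) is uniform in terms of $\lambda,\Lambda$ alone. The torus structure is what makes this work: the confinement of $\nabla P$ to a bounded distance from the identity, together with the lower bound on the Monge-Amp\`ere measure, prevents the sublevel sections from degenerating at any fixed scale, so all the usual localization tools of the Monge-Amp\`ere theory apply uniformly.
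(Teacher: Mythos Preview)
Your proposal is correct and matches the paper's approach: the paper does not give a detailed proof of this theorem at all, but simply states that it follows by combining Theorem~\ref{thm:transport-torus} with the known interior regularity results for strictly convex Alexandrov solutions of Monge--Amp\`ere, citing \cite{CA1,CAW2p,CA2,Cor,DepFi,GT}. Your sketch fills in exactly how those citations are meant to be used, and your identification of the key point---that the torus structure forces a uniform modulus of strict convexity depending only on $\lambda,\Lambda$, so that all of Caffarelli's and De Philippis--Figalli's interior estimates apply with uniform constants---is precisely the content that the reference to \cite{Cor} is carrying.
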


\section{The dual problem and the regularity of the velocity field}\label{sect:dual}

In this section we recall some properties of solutions of
\eqref{eqn:dualsystem}, and we show the $L^1$ integrability
of the velocity field $u_t$ defined in \eqref{eqn:velocity}.

We know by Theorem \ref{thm:transport-torus} that $\rho_t$ uniquely
defines $P_t$ (and so also $P_t^*$) through the relation $(\nabla
P_t)_\sharp\L_{\T^2}=\rho_t$ up to an additive constant. We have the
following result (see \cite{bebr, cufe}):

\begin{theorem}[Existence of solutions of \eqref{eqn:dualsystem}]\label{thm:dualeq}
Let $P_0:\R^2\to\R$ be a convex function such that $P_0(x)-|x|^2/2$
is $\Z^2$-periodic, $(\nabla P_0)_\sharp\L_{\T^2}\ll\L^2$, and the
density $\rho_0$ satisfies $0<\lambda\le\rho_0\le\Lambda<\infty$.
Then there exist convex functions $P_t,\,P_t^* :\R^2\to\R$, with
$P_t(x)-|x|^2/2$ and $P_t^*(y)-|y|^2/2$ periodic, uniquely
determined up to time-dependent additive constants, such that
$(\nabla P_t)_\sharp \L^2=\rho_t\L^2$, $(\nabla P_t^*)_\sharp
\rho_t=\L_{\T^2}$. In addition, setting $U_t(x)=J(x-\nabla
P_t^*(x))$, $\rho_t$ is a distributional solution to
\eqref{eqn:dualsystem}, namely
\begin{equation}\label{eqn:sg-dual-weak}
 \int \int_{\T^2}
\Big\{ \partial_t \varphi_t(x) + \nabla \varphi_t(x) \cdot U_t(x)
\Big\} \rho_t(x)\, dx \, dt + \int_{\T^2} \varphi_0(x) \rho_0(x)
\, dx = 0
\end{equation}
for every $\varphi\in C^\infty_c(\R^2\times [0,\infty))$
$\Z^2$-periodic in the space variable.

Finally, the following regularity properties hold:
\begin{enumerate}
\item[(i)]$\lambda\le\rho_t\le\Lambda$;
\item[(ii)] $\rho_t\L^2 \in C([0,\infty), \mathcal P_w (\T^2))$;\footnote{Here
$\mathcal P_w(\T^2)$ is the space of probability measures on the
torus endowed with the \emph{weak} topology induced by the duality
with $C(\T^2)$}
\item[(iii)] $P_t-\negint_{\T^2}P_t,\, P^*_t-\negint_{\T^2}P_t^*
\in L^\infty([0,\infty),W_{\rm loc}^{1,\infty}(\R^2))\cap
C([0,\infty),W_{\rm loc}^{1,r}(\R^2))$ for every $r\in [1,\infty)$;
\item[(iv)] $\|U_t\|_\infty \leq \sqrt{2}/2$.
\end{enumerate}
\end{theorem}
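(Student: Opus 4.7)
The plan is to construct solutions by a time-discretization scheme based on iterated optimal transport, extract a compact limit via an Arzelà-Ascoli argument in the Wasserstein topology, and finally read off the regularity statements (i)--(iv) from the uniform bounds accumulated along the way.

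First, fix a time step $\tau>0$ and set $\rho^\tau_0:=\rho_0$. Inductively, given $\rho^\tau_k$ with $\lambda\leq\rho^\tau_k\leq\Lambda$, I would let $P^\tau_k$ be the convex potential furnished by Theorem~\ref{thm:transport-torus} with $(\nabla P^\tau_k)_\sharp\L_{\T^2}=\rho^\tau_k\L^2$, form its Legendre conjugate $(P^\tau_k)^*$, and define
\[
U^\tau_k(x):=J\bigl(x-\nabla (P^\tau_k)^*(x)\bigr).
\]
By \eqref{diam-toro} we have $\|U^\tau_k\|_\infty\leq \sqrt{2}/2$, and $U^\tau_k$ is divergence-free in the distributional sense, since $\nabla\cdot(Jx)=0$ and $\nabla\cdot(J\nabla g)=0$ for any $g\in W^{1,1}_{\rm loc}$ by the skew-symmetry of $J$. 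On $[k\tau,(k+1)\tau]$ I would then let $\rho^\tau_t$ evolve as the unique solution of the frozen continuity equation $\partial_t\rho+\nabla\cdot(U^\tau_k\rho)=0$ with datum $\rho^\tau_k$; since $(P^\tau_k)^*\in C^{1,\beta}$ by Theorem~\ref{thm:transport-torus reg}(i), $U^\tau_k$ is Hölder continuous and its flow is well defined, and by the divergence-free character of $U^\tau_k$ the $L^\infty$ bounds $\lambda\leq\rho^\tau_t\leq\Lambda$ are preserved. Set $\rho^\tau_{k+1}:=\rho^\tau_{(k+1)\tau}$ and iterate.

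Next, the curves $t\mapsto\rho^\tau_t$ are uniformly Lipschitz in the $1$-Wasserstein distance (thanks to $\|U^\tau\|_\infty\leq\sqrt{2}/2$) and have equibounded densities, so by Arzelà-Ascoli in $\mathcal{P}_w(\T^2)$, up to subsequences $\rho^\tau_t\to\rho_t$ in $C([0,T];\mathcal{P}_w(\T^2))$ for every $T<\infty$, with $\lambda\leq\rho_t\leq\Lambda$ pointwise. The decisive step is the uniform $C^{1,\beta}$ bound on $P^\tau_t$ and $(P^\tau_t)^*$ provided by Theorem~\ref{thm:transport-torus reg}(i): combined with stability of Monge potentials under weak convergence of the marginals (both densities lying between $\lambda$ and $\Lambda$), this yields $\nabla(P^\tau_t)^*\to\nabla P_t^*$ locally uniformly, and hence $U^\tau_t\to U_t=J(x-\nabla P_t^*(x))$ locally uniformly in space for every $t$. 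This is far more than enough to pass to the limit in the weak formulation and obtain \eqref{eqn:sg-dual-weak}. Properties (i) and (iv) survive the limit by the uniform bounds, and (ii) follows because $\|U_t\|_\infty\leq\sqrt{2}/2$ makes $t\mapsto\rho_t\L^2$ Lipschitz in $1$-Wasserstein, which metrizes weak convergence on $\T^2$. For (iii), after fixing additive constants by $\negint_{\T^2}P_t=\negint_{\T^2}P_t^*=0$, the $C^{1,\beta}$ estimate of Theorem~\ref{thm:transport-torus reg}(i) yields the uniform-in-$t$ $W^{1,\infty}$ bound; strong continuity in $W^{1,r}_{\rm loc}$ follows by applying the same stability argument to any sequence $t_n\to t$, extracting a $C^1$-convergent subsequence of potentials (which must coincide with $\nabla P_t$ by uniqueness), and interpolating against the $W^{1,\infty}$ bound via dominated convergence.

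The main obstacle I anticipate is justifying the stability of the nonlinear maps $\rho\mapsto\nabla P$ and $\rho\mapsto\nabla P^*$ under weak convergence of $\rho$: both the passage to the limit in the scheme and the time-continuity statement in~(iii) hinge on it. This stability rests crucially on the two-sided density bounds $\lambda\leq\rho\leq\Lambda$ and the strict-convexity/$C^{1,\beta}$ regularity they imply via Theorem~\ref{thm:transport-torus reg}(i); without either bound, the optimal map could vary discontinuously and the whole argument would break down.
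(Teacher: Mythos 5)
The paper does not actually prove Theorem~\ref{thm:dualeq}; it delegates to the references \cite{bebr,cufe}, noting only that those proofs (given in $\re^3$) adapt verbatim to the torus via Theorem~\ref{thm:transport-torus}. Your time-discretization scheme (freeze the velocity on $[k\tau,(k+1)\tau]$, advect, recompute the optimal map, iterate, compactify in Wasserstein, pass to the limit using stability of optimal maps under weak convergence with two-sided density bounds) is precisely the Benamou--Brenier argument that these references carry out, so your route matches the one the paper relies on.

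There is, however, one genuine overclaim that needs repairing. You assert that since $(P^\tau_k)^*\in C^{1,\beta}$ the field $U^\tau_k$ is H\"older continuous ``and its flow is well defined.'' H\"older continuity (with exponent $\beta<1$) gives existence of solution curves by Peano but not uniqueness, so the flow and hence the push-forward $\rho^\tau_t$ is not automatically well defined by that reasoning alone. Two standard fixes: (a) mollify $U^\tau_k$, solve the frozen continuity equation exactly, and pass to the limit using weak-$*$ compactness of the densities (preserved between $\lambda$ and $\Lambda$ by the divergence-free property); or (b) observe that $U^\tau_k=J\bigl(x-\nabla(P^\tau_k)^*\bigr)$ has $\nabla(P^\tau_k)^*\in BV_{\rm loc}$ because $(P^\tau_k)^*$ is convex, and invoke the $BV$ theory of regular Lagrangian flows \cite{AM1,AM2} for bounded divergence-free fields, which yields a unique measure-preserving flow. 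Either way the scheme is rescued; the rest of your limiting argument (uniform Wasserstein-Lipschitz estimate, Arzel\`a--Ascoli in $C([0,T];\mathcal P_w(\T^2))$, $C^{1,\beta}$-equicontinuity plus stability of optimal potentials, and the read-off of (i)--(iv)) is sound.
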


To be precise, in \cite{bebr,cufe} the proof is given in $\re^3$,
but actually it can be rewritten verbatim on the $2$-dimensional
torus, using the optimal transport maps provided by
Theorem~\ref{thm:transport-torus}.
Observe that, by Theorem \ref{thm:dualeq}(ii), $t \mapsto \rho_t\L^2$ is weakly continuous,
so $\rho_t$ is a well-defined function \textit{for every} $t\geq 0$.

Further regularity properties of $\nabla P_t$ and $\nabla P_t^*$ with respect to
time will be proved in Propositions~\ref{prop:est} and \ref{prop:time-reg}.

In the proof of Theorem~\ref{thm:main} we will need to test  with
functions which are merely $W^{1,1}$. This is made possible by
the following lemma.

\begin{lemma}\label{rmk:scontr}
Let $\rho_t$ and $P_t$ be as in Theorem~\ref{thm:dualeq}. Then
\eqref{eqn:sg-dual-weak} holds for every $\varphi \in
W^{1,1}(\T^2\times [0,\infty))$ which is compactly supported in time.
(Now $\varphi_0(x)$ has to be understood in the sense of
traces.)
\end{lemma}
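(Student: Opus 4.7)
The plan is to argue by a routine approximation: I would mollify $\varphi$ to obtain a sequence of smooth $\Z^2$-periodic test functions compactly supported in time, write \eqref{eqn:sg-dual-weak} for each of them, and pass to the limit exploiting the boundedness of $\rho_t$ and $U_t$ given by Theorem~\ref{thm:dualeq}(i) and (iv).

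Concretely, fix $T_0$ with $\mathrm{supp}\,\varphi\subset\T^2\times[0,T_0]$. First I would extend $\varphi$ to $\T^2\times\R$ by even reflection in time, $\tilde\varphi(x,t):=\varphi(x,|t|)$. Even reflection preserves $W^{1,1}$ regularity (the time derivative becomes odd in $t$ but still lies in $L^1$) and does not affect periodicity in space, so $\tilde\varphi\in W^{1,1}(\T^2\times\R)$. I would then convolve $\tilde\varphi$ with a standard mollifier $\eta_\e$ on $\R^2\times\R$; since $\tilde\varphi$ is $\Z^2$-periodic in $x$ and $\eta_\e$ has compact support, $\varphi^\e:=\tilde\varphi*\eta_\e$ is smooth and $\Z^2$-periodic in space. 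Multiplying by a cutoff $\chi\in C^\infty_c([0,\infty))$ with $\chi\equiv 1$ on $[0,T_0+1]$ yields a sequence $\varphi^\e\in C^\infty_c(\R^2\times[0,\infty))$, $\Z^2$-periodic in space, such that
\[
\varphi^\e\to\varphi,\qquad \partial_t\varphi^\e\to\partial_t\varphi,\qquad \nabla\varphi^\e\to\nabla\varphi\qquad\text{in }L^1(\T^2\times[0,\infty)).
\]
Moreover, by continuity of the trace operator $W^{1,1}(\T^2\times[0,T_0+1])\to L^1(\T^2)$ at $t=0$, one has $\varphi^\e(\cdot,0)\to\varphi_0$ in $L^1(\T^2)$.

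Writing \eqref{eqn:sg-dual-weak} for $\varphi^\e$ and letting $\e\to 0$, all three terms pass to the limit: the bulk integrals converge because $\rho_t\le\Lambda$ and $|U_t|\le\sqrt 2/2$, while the boundary term at $t=0$ converges because $\rho_0\in L^\infty(\T^2)$. This gives the identity for $\varphi$.

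I do not anticipate any serious obstacle; the argument is a standard density argument made possible by the uniform $L^\infty$ bounds on $\rho$, $U$, and $\rho_0$, which make $W^{1,1}$ convergence of the test function more than sufficient. The only mild point is ensuring that the mollification produces $\Z^2$-periodic functions, which is automatic once the reflected extension is itself periodic in space.
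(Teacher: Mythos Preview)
Your argument is correct and is exactly the density argument the paper uses: approximate $\varphi$ by smooth $\Z^2$-periodic functions compactly supported in time and pass to the limit using the uniform $L^\infty$ bounds on $\rho_t$ and $U_t$ from Theorem~\ref{thm:dualeq}(i),(iv). The paper simply asserts the existence of such an approximating sequence and the trace convergence, whereas you spell out the construction via even reflection, mollification, and a time cutoff; the substance is identical.
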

\begin{proof}
Let $\varphi^n\in C^\infty(\T^2\times [0,\infty))$ be strongly
converging to $\varphi$ in $W^{1,1}$, so that $\varphi_0^n$
converges to $\varphi_0$ in $L^1(\T^2)$. Taking into account that
both $\rho_t$ and $U_t$ are uniformly bounded from above in
$\T^2\times [0,\infty)$, we can apply
\eqref{eqn:sg-dual-weak} to the test functions $\varphi^n$
and let $n \to \infty$
to obtain the same formula with $\varphi$.
\end{proof}

The following proposition, which provides the Sobolev regularity of
$t\mapsto \nabla P_t^*$, is our main technical tool.
Notice that, in order to prove Theorem~\ref{thm:main}, only finiteness
of the left hand side in \eqref{ts:loep} would be needed, and the
proof of this fact involves only a smoothing argument, the
regularity estimates of \cite{DepFi} collected in
Theorem~\ref{thm:transport-torus reg}(ii), and the argument of
\cite[Theorem 5.1]{Loe1}. However, the continuity result in
\cite{DeFi2} allows to show the validity of the natural \emph{a priori} estimate on the
left hand side in \eqref{ts:loep}.

\begin{proposition}[Time regularity of optimal maps]
\label{prop:est} Let $\rho_t$ and $P_t$ be as in Theorem~\ref{thm:dualeq}. Then
$\nabla P_t^*\in W^{1,1}_{\rm loc}(\T^2\times [0,\infty);\R^2)$, and  for
every $k\in\n$ there exists a constant $C(k)$ such that, for almost every $t\geq 0$,
\begin{multline}
\label{ts:loep}
\int_{\T^2} \rho_t |\partial_t \nabla P_t^*| \log^k_+ (|\partial_t \nabla P_t^*|) \, dx\\
\leq C(k) \left( \int_{\T^2}\rho_t |\nabla^2 P_t^*|
\log^{2k}_+(|\nabla^2 P_t^*|) \, dx+ {\rm ess}\sup_{\T^2}\left(
\rho_t |U_t|^2 \right) \int_{\T^2} |\nabla^2 P_t^*| \, dx\right).
\end{multline}
\end{proposition}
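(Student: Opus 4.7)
Following the remark preceding the proposition, I would derive \eqref{ts:loep} first as an a priori estimate for smooth solutions and then invoke the continuity result of \cite{DeFi2} to upgrade it to the general case. The preliminary step is to approximate $\rho_0$ by smooth densities $\rho_0^\varepsilon$ with $\lambda\le\rho_0^\varepsilon\le\Lambda$; by Theorem~\ref{thm:dualeq} and Theorem~\ref{thm:transport-torus reg}(iii) the corresponding dual solutions $\rho_t^\varepsilon$ satisfy the same bound and the potentials $P_t^\varepsilon, P_t^{*\varepsilon}$ are smooth in space for each $t$, which legitimises all the time differentiations that follow.

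\textbf{Linearised Monge--Amp\`ere equation and $L^2$ bound.} Setting $R_t:=\partial_t P_t^*$, I would differentiate $\det\nabla^2 P_t^*=\rho_t$ in $t$ and combine Piola's identity (the cofactor of a Hessian is row-wise divergence-free) with the continuity equation for $\rho_t$ to obtain
\[
\nabla\cdot\bigl(\mathrm{cof}(\nabla^2 P_t^*)\,\nabla R_t\bigr)=\partial_t\rho_t=-\nabla\cdot(\rho_tU_t),
\]
identifying $R_t$ as a solution of a linearised Monge--Amp\`ere equation. Testing against $R_t$ on $\T^2$, using the two-dimensional identity $\mathrm{cof}(\nabla^2 P_t^*)=\rho_t(\nabla^2 P_t^*)^{-1}\ge\frac{\rho_t}{|\nabla^2 P_t^*|}I$ (the smaller eigenvalue of the inverse of a $2\times 2$ symmetric positive matrix is the reciprocal of its operator norm), and applying Cauchy--Schwarz, I would derive the natural energy bound
\[
\int_{\T^2}\frac{\rho_t|\nabla R_t|^2}{|\nabla^2 P_t^*|}\,dy\le\int_{\T^2}\rho_t|U_t|^2|\nabla^2 P_t^*|\,dy.
\]

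\textbf{Promotion to $L\log^k L$ and passage to the limit.} Applying Cauchy--Schwarz in the form
\[
\int\rho_t|\nabla R_t|\log_+^k|\nabla R_t|\,dy\le\Bigl(\int\frac{\rho_t|\nabla R_t|^2}{|\nabla^2 P_t^*|}\,dy\Bigr)^{1/2}\Bigl(\int\rho_t|\nabla^2 P_t^*|\log_+^{2k}|\nabla R_t|\,dy\Bigr)^{1/2},
\]
the first factor is controlled by the previous step. In the second, the identity $\nabla R_t=-\nabla^2 P_t^*\cdot\partial_t\nabla P_t\circ\nabla P_t^*$ together with $\log_+(ab)\le\log_+a+\log_+b$ and the uniform bound $|U_t|\le\sqrt 2/2$ from Theorem~\ref{thm:dualeq}(iv) allow one to dominate $\log_+^{2k}|\nabla R_t|$ by the $L\log^{2k}L$ quantities of Theorem~\ref{thm:transport-torus reg}(ii) (applied to both $P_t$ and $P_t^*$), plus a small term that can be absorbed on the left. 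Since the resulting a priori bound depends only on $\lambda$, $\Lambda$, and $k$, the strong $L^1$ continuity of $\nabla^2 P_t^{*\varepsilon}\to\nabla^2 P_t^*$ from \cite{DeFi2}, combined with lower semicontinuity of $u\mapsto\int u\log_+^k u$ for nonnegative $u$, lets one pass to the limit and recover \eqref{ts:loep} for the original solution. The $W^{1,1}_{\mathrm{loc}}$ regularity of $(t,y)\mapsto\nabla P_t^*$ then follows by integrating \eqref{ts:loep} in $t$ on bounded intervals and using $\rho_t\ge\lambda$.

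\textbf{Main obstacle.} The technical heart is the passage from the $L^2$-energy bound to the $L\log^k L$-type bound: one must extract $\log_+^k$ on the left from $\log_+^{2k}$ on the right through Cauchy--Schwarz, and then dominate $\log_+^{2k}|\nabla R_t|$ by an integrable quantity without reintroducing a non-absorbable term in $|\nabla R_t|$. This is exactly where the sharp $L\log^{2k}L$ regularity of $\nabla^2 P_t^*$ of \cite{DepFi} is indispensable: the $C^{1,\beta}$ bound of Theorem~\ref{thm:transport-torus reg}(i) alone would not suffice, since $\rho_t$ is only bounded and not continuous.
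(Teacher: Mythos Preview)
Your overall architecture---linearized Monge--Amp\`ere equation, an $L^2$-type energy identity, promotion to $L\log^kL$, and passage to the limit via \cite{DeFi2}---matches the paper's. The energy bound you derive is also essentially the paper's \eqref{est:l2norms}. The gap is in the promotion step.

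After your Cauchy--Schwarz you are left with the factor
\[
\int_{\T^2}\rho_t\,|\nabla^2 P_t^*|\,\log_+^{2k}|\nabla R_t|\,dy,
\]
and you propose to control $\log_+^{2k}|\nabla R_t|$ through the identity $\nabla R_t=-\nabla^2 P_t^*\cdot[\partial_t\nabla P_t](\nabla P_t^*)$ together with $|U_t|\le\sqrt2/2$. But this identity introduces $\partial_t\nabla P_t$, the time derivative of the \emph{other} potential, which is not bounded and is not among the quantities controlled by Theorem~\ref{thm:transport-torus reg}(ii) (that theorem concerns only spatial Hessians). The bound on $|U_t|$ is irrelevant to $\partial_t\nabla P_t$. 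Any attempt to absorb, e.g.\ via $\log_+^{2k}s\le\delta\, s\log_+^k s+C_\delta$, produces $\int\rho_t|\nabla^2 P_t^*|\,|\nabla R_t|\log_+^k|\nabla R_t|$, which carries an extra factor $|\nabla^2 P_t^*|$ compared to the left-hand side and therefore cannot be absorbed, since $|\nabla^2 P_t^*|$ is not bounded. The argument does not close.

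The paper avoids this circularity by working \emph{pointwise} rather than through an integral Cauchy--Schwarz: one writes $|\partial_t\nabla P_t^*|\le ab$ with $a=|(\nabla^2 P_t^*)^{1/2}|$ and $b=|(\nabla^2 P_t^*)^{-1/2}\partial_t\nabla P_t^*|$, and applies the elementary Orlicz-type inequality (Lemma~\ref{lemma:orlicz})
\[
ab\,\log_+^k(ab)\le C(k)\bigl(a^2\log_+^{2k}a+b^2\bigr).
\]
After integration, the $a^2\log_+^{2k}a$ term is exactly $\int\rho_t|\nabla^2 P_t^*|\log_+^{2k}|\nabla^2 P_t^*|$, and the $b^2$ term is precisely the quantity already bounded by the energy estimate. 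No term involving $\log_+|\nabla R_t|$ ever appears on the right.

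A secondary issue: your approximation smooths only $\rho_0$ and then invokes Theorem~\ref{thm:dualeq} for the evolution. That theorem does not propagate smoothness of the density, so you cannot conclude from it alone that $\rho_t^\varepsilon$ is smooth for $t>0$; Theorem~\ref{thm:transport-torus reg}(iii) then does not apply. The paper instead mollifies the full space--time pair $(\rho,U)$ (setting $U^n=(\rho U)\ast\sigma^n/\rho\ast\sigma^n$ to preserve the continuity equation), which makes both smooth directly and allows the a priori computation via Lemma~\ref{lemma:MAlin}.
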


To prove Proposition~\ref{prop:est}, we need some preliminary
results.

\begin{lemma}\label{lemma:orlicz}
For every $k\in\n$ we have
\begin{equation}
ab \log ^k_+(ab) \leq 2^{k-1} \left[ \left(\frac{k}{e}\right)^k +1 \right] b^2 + 2^{3(k-1)}a^2 \log^{2k}_+ (a) \quad\quad\forall \,(a,b)\in \re^+ \times \re^+.
\label{eqn:dis-num}
\end{equation}
\end{lemma}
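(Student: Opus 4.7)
The plan is a direct case analysis based on the relative sizes of $a$, $b$, and $1$, using three elementary tools that will appear repeatedly: (i) the calculus estimate $\log^k_+(t) \leq (k/e)^k\, t$ for $t>0$, obtained by maximizing $\log^k(t)/t$ at $t=e^k$; (ii) weighted Young's inequality $XY \leq X^2/(2\lambda) + \lambda Y^2/2$, with the weight $\lambda>0$ to be tuned; and (iii) the convexity bound $(x+y)^k \leq 2^{k-1}(x^k+y^k)$ for $x,y \geq 0$. The constants $2^{k-1}[(k/e)^k+1]$ and $2^{3(k-1)}$ in the statement are recovered exactly by choosing $\lambda$ appropriately.

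If $ab \leq 1$, the left-hand side vanishes. Otherwise I distinguish three subcases. When $a \leq 1$, we have $ab \leq b$, so $\log^k_+(ab) \leq \log^k_+(b)$, and (i) combined with $a \leq 1$ gives $ab\log^k_+(ab) \leq (k/e)^k ab^2 \leq (k/e)^k b^2$, which fits inside the $b^2$ term on the right-hand side. When $a>1$ and $b\leq a$, the bound $\log_+(ab) \leq 2\log_+(a)$ yields $\log^k_+(ab) \leq 2^k \log^k_+(a)$; applying (ii) with $X = a\log^k_+(a)$, $Y=b$, and $\lambda = 4^{1-k}$ then delivers $ab\log^k_+(ab) \leq 2^{3(k-1)} a^2 \log^{2k}_+(a) + 2^{1-k} b^2$, and since $2^{1-k} \leq 2^{k-1}[(k/e)^k+1]$ for $k \geq 1$, this lies under the target.

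The main subcase is $a>1$ and $b>a$. Here I exploit the decomposition $\log(ab) = 2\log a + \log(b/a)$, both summands being positive, and apply (iii) to obtain $\log^k(ab) \leq 2^{2k-1} \log^k_+(a) + 2^{k-1} \log^k(b/a)$. The first piece $2^{2k-1} ab\log^k_+(a)$ is handled exactly as before, this time with $\lambda = 2^{1-k}$ in (ii), producing a contribution of $2^{3(k-1)} a^2 \log^{2k}_+(a) + 2^{k-1} b^2$. The second piece is controlled by (i) applied to $t = b/a > 1$: $\log^k(b/a) \leq (k/e)^k(b/a)$, hence $2^{k-1} ab\log^k(b/a) \leq 2^{k-1}(k/e)^k b^2$. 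Summing the two contributions reproduces the right-hand side verbatim.

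The only genuinely delicate step is the tuning of the Young weight $\lambda$ so that the coefficient of $a^2\log^{2k}_+(a)$ aligns with $2^{3(k-1)}$ while the coefficient of $b^2$ remains within $2^{k-1}[(k/e)^k+1]$; everything else is careful bookkeeping of the powers of $2$ coming out of the convexity bound (iii) and the calculus estimate (i).
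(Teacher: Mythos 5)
Your proof is correct, and at its core it uses exactly the same three ingredients as the paper: the decomposition $\log(ab)=2\log a+\log(b/a)$, the convexity bound $(x+y)^k\leq 2^{k-1}(x^k+y^k)$, the calculus estimate $\log_+^k(t)\leq (k/e)^k\,t$, and an AM--GM/Young step to split $ab\log_+^k(a)$ into $b^2$ and $a^2\log_+^{2k}(a)$ pieces. The one organisational difference is that the paper avoids your case analysis entirely by exploiting subadditivity of $\log_+$, namely $\log_+(ab)\leq\log_+(b/a)+2\log_+(a)$ for \emph{all} $a,b>0$; this makes the chain of inequalities valid without distinguishing whether $a$, $b$, or $ab$ exceed $1$, since any spurious terms that appear when the genuine $\log$ would be negative are simply nonnegative overestimates. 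The paper also phrases the last step as the plain AM--GM inequality $2xy\leq x^2+y^2$ with $x=b$ and $y=2^{k-1}a\log_+^k(a)$, rather than as a tuned weighted Young's inequality; this is the same computation, but the weight $\lambda$ does not need to be ``discovered'' because it is forced by matching the target coefficient $2^{3(k-1)}$. Your version makes the sign bookkeeping fully explicit, at the cost of repeating the Young step in two subcases; the paper's version is a four-line chain. Both land on precisely the stated constants.
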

\begin{proof}
From the elementary inequalities
$$ \log_+(ts) \leq \log_+(t)+\log_+(s),\quad (t+s)^k\le 2^{k-1} (t^k+s^k),\quad\log_+^k(t) \leq \left(\frac{k}{e}\right)^k t$$
which hold for every $t,\,s>0$, we infer
\[
\begin{split}
ab \log_+^k(ab)
&\leq ab\left[\log_+\left(\frac{b}{a}\right)+ 2\log_+(a) \right]^k\\
&\leq 2^{k-1}ab \left[\log_+^k\left(\frac{b}{a}\right)+2^k \log_+^k(a) \right]\\
&\leq 2^{k-1}\left[ \left(\frac{k}{e}\right)^k b^2+ 2^k ab\log_+^k(a) \right]\\
&\leq 2^{k-1}\left[ \left(\frac{k}{e}\right)^k b^2+ b^2+2^{2(k-1)}a^2 \log^{2k}_+ (a)\right],
\end{split}
\]
which proves \eqref{eqn:dis-num}.
\end{proof}

\begin{lemma}[Space-time regularity of transport]\label{lemma:MAlin}
Let $k \in \N\cup\{0\}$, and let $\rho\in C^\infty(\T^2\times [0,\infty))$
and $U\in
C^\infty(\T^2\times [0,\infty);\R^2)$ satisfy
$$0<\lambda \leq \rho_t(x) \leq \Lambda<\infty \quad\quad \forall\, (x,t)\in \T^2 \times [0,\infty),$$
$$\partial_t \rho_t +\nabla \cdot(U_t\rho_t) = 0 \quad\quad\text{in $\T^2\times [0,\infty),$}$$
and $\int_{\T^2}\rho_t\,dx=1$ for all $t\geq 0$. Let us consider
convex conjugate maps $P_t$ and $P_t^*$ such that $P_t(x)-|x|^2/2$ and
$P_t^*(y)-|y|^2/2$ are $\Z^2$-periodic, $(\nabla P_t^*)_\sharp
\rho_t=\L_{\T^2}$, $(\nabla P_t)_\sharp \L_{\T^2}=\rho_t$. Then:
\begin{enumerate}
\item[(i)] $P^*_t-\negint_{\T^2}P_t^* \in {\rm Lip_{\rm loc}} ([0,\infty);C^{k}(\T^2))$ for any $k \in \N$.
\item[(ii)] The following linearized Monge-Amp\`ere equation holds:
 \begin{equation}
 \nabla \cdot \bigl(\rho_t (\nabla^2P_t^*)^{-1}\partial_t \nabla P_t^*\bigr)  = - \nabla \cdot(\rho_t U_t).
 \label{ts:MAlin}
 \end{equation}
\end{enumerate}
\end{lemma}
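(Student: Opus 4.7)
The plan has three parts: first, deduce full spatial regularity of $P_t^*$ at each fixed $t$ from Theorem~\ref{thm:transport-torus reg}(iii); second, upgrade $t$-continuity to Lipschitz (indeed smooth) dependence in $t$ via the implicit function theorem applied to the Monge-Amp\`ere equation; finally, derive (ii) by a direct computation based on the Piola identity for cofactor matrices.

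Since $(\nabla P_t^*)_\sharp \rho_t = \L_{\T^2}$, Theorem~\ref{thm:transport-torus} gives (uniquely up to an additive constant) that $P_t^*$ solves $\det \nabla^2 P_t^* = \rho_t$ on $\T^2$, with $\lambda \leq \rho_t \leq \Lambda$ and $\rho_t \in C^k(\T^2)$ uniformly for $t$ in compact time intervals. Thus Theorem~\ref{thm:transport-torus reg}(iii) yields that $P_t^* - \negint_{\T^2} P_t^* \in C^k(\T^2)$ for every $k$, with norm bounded locally uniformly in $t$. To promote $t$-continuity to $t$-Lipschitz, I would consider, for fixed $\alpha \in (0,1)$ and $k \geq 0$, the map $F(Q,\rho) := \det\nabla^2 Q - \rho$ from $(C^{k+2,\alpha}(\T^2)/\mathbb{R}) \times \{\rho \in C^{k,\alpha}(\T^2) : \int \rho = 1,\ \rho \geq \lambda/2\}$ into $\{\varphi \in C^{k,\alpha}(\T^2) : \int \varphi = 0\}$. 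Its partial derivative in $Q$ at a solution $(P_t^*,\rho_t)$ is the linearized Monge-Amp\`ere operator $L_t v = \mathrm{cof}(\nabla^2 P_t^*) : \nabla^2 v$, which is uniformly elliptic with smooth coefficients and contains no zeroth-order term; by standard Schauder theory on the torus, $L_t$ is an isomorphism from $C^{k+2,\alpha}/\mathbb{R}$ onto zero-mean $C^{k,\alpha}$ functions. Since $t \mapsto \rho_t$ is smooth into $C^{k,\alpha}$, the implicit function theorem then yields (i).

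For (ii), the central observation is the Piola identity: for every $Q \in C^3(\T^2)$, each row of $\mathrm{cof}(\nabla^2 Q)$ is divergence-free, i.e.\ $\sum_i \partial_i [\mathrm{cof}(\nabla^2 Q)]_{ij} = 0$ for each $j$. Using $\rho_t (\nabla^2 P_t^*)^{-1} = \mathrm{cof}(\nabla^2 P_t^*)$ (which follows from $\det \nabla^2 P_t^* = \rho_t$), one computes
\[
\nabla \cdot \bigl(\rho_t (\nabla^2 P_t^*)^{-1} \partial_t \nabla P_t^*\bigr) = \mathrm{cof}(\nabla^2 P_t^*) : \partial_t \nabla^2 P_t^* = \partial_t \det \nabla^2 P_t^* = \partial_t \rho_t,
\]
where the first equality uses Piola to discard the term where the outer divergence hits $\mathrm{cof}$. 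Combining with the continuity equation $\partial_t \rho_t = -\nabla \cdot (\rho_t U_t)$ yields (ii).

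The main obstacle is making the pointwise time derivative used in (ii) rigorous; this is precisely what the Banach-space implicit function theorem of step (b) accomplishes, provided one is careful to quotient out by additive constants (so Monge-Amp\`ere has a unique solution) and to restrict to zero-mean right-hand sides (so the linearized operator is surjective). Once this functional-analytic framework is in place, the computation in (ii) is just a short application of Piola.
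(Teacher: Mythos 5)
Your argument is correct and the overall strategy (elliptic regularity for the linearized Monge--Amp\`ere operator, then Piola identity plus the continuity equation for (ii)) matches the paper's, but the machinery you use for (i) is genuinely different. The paper proceeds bare-hands: it writes the difference quotient $(\rho_s-\rho_t)/(s-t)=(\det\nabla^2P_s^*-\det\nabla^2P_t^*)/(s-t)$, applies the fundamental theorem of calculus in the matrix entries so that $(P_s^*-P_t^*)/(s-t)$ solves a uniformly elliptic divergence-form equation whose coefficients are averaged cofactor matrices, and then invokes Schauder estimates to bound that difference quotient uniformly in $C^{k+2,\alpha}$, which gives Lipschitz time regularity directly and yields the linearized equation $\sum_{ij}M_{ij}(\nabla^2P_t^*)\,\partial_t\partial_{ij}P_t^*=\partial_t\rho_t$ by passing $s\to t$. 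You instead set up the Banach-space implicit function theorem for $F(Q,\rho)=\det\nabla^2Q-\rho$, checking that the linearization $v\mapsto \mathrm{cof}(\nabla^2P_t^*):\nabla^2 v$ is an isomorphism from $C^{k+2,\alpha}/\R$ onto zero-mean $C^{k,\alpha}$. Both routes hinge on the same ellipticity input (the uniform bounds $c_1 Id\le\nabla^2P_t^*\le c_2 Id$ from Theorem~\ref{thm:transport-torus reg}(iii)), but the IFT packaging delivers, for free, smooth rather than merely Lipschitz dependence on $t$, which in particular makes the pointwise time derivative $\partial_t\nabla P_t^*$ used in (ii) completely transparent; the paper's difference-quotient argument is more elementary and self-contained, trading that cleanliness for a limit passage in $s\to t$ that must be justified via the uniform elliptic bounds. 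For (ii) your computation via $\rho_t(\nabla^2P_t^*)^{-1}=\mathrm{cof}(\nabla^2P_t^*)$ and the Piola identity is the same as the paper's.
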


\begin{proof} Let us fix $T>0$.
From the regularity theory for the Monge-Amp\`ere equation (see
Theorem~\ref{thm:transport-torus reg}) we obtain that $P_t\in
C^\infty(\re^2)$, uniformly for
$t \in [0,T]$, and there exist universal constants
$c_1,\,c_2>0$ such that
\begin{equation}
c_1 Id \leq \nabla^2 P_t^*(x) \leq c_2 Id\qquad\forall\, (x,t)\in\T^2\times[0,T].
\label{est:d2}
\end{equation}
Since $\nabla P_t^*$ is the inverse of $\nabla P_t$, by the smoothness of $P_t$
and \eqref{est:d2} we deduce that $P_t^*\in
C^\infty(\re^2)$, uniformly on
$[0,T]$.

Now, to prove (i), we need to investigate the time regularity of $P_t^*- \negint_{\T^2} P_t^*$.
Moreover, up to adding a time dependent constant to $P_t$, we can assume
without loss of generality that $\int_{\T^2} P^*_t=0$ for all $t$.
By the condition $(\nabla P_t^*)_\sharp \rho_t=\L_{\T^2}$ we get that for
any $0\le s,t\le T$ and $x\in \re^2$ it holds
\begin{equation} \label{eqn:rho-dif}
\begin{split}
\frac{ \rho_s(x) - \rho_t(x)}{s-t} &= \frac{ \det( \nabla^2P_s^*(x)) -\det( \nabla^2P_t^*(x))}{s-t}\\
&= \sum_{i,j=1}^2 \biggl(\int_0^1 \frac {\partial\det}{\partial \xi_{ij}} (
\tau \nabla^2  P_s^*(x)+ (1-\tau) \nabla^2 P_t^*(x)) \, d\tau\biggr) \,
\frac{ \partial _{ij} P_s^*(x) -\partial _{ij}P_t^*(x)}{s-t}.
\end{split}
\end{equation}
Given a $2\times 2$ matrix $A=(\xi_{ij})_{i,j=1,2}$, we denote by
$M(A)$ the cofactor matrix of $A$. We recall that
\begin{equation}\label{eqn:det-deriv}
 \frac {\partial\det (A)}{\partial \xi_{ij}}  = M_{ij}(A),
\end{equation}
and if $A$ is invertible then $M(A)$
satisfies the identity
\begin{equation}
 M(A)= \det(A) \, A^{-1}.
\label{eqn:cof}
\end{equation}
Moreover, if $A$ is symmetric and satisfies $c_1 Id \leq A\leq c_2
Id$ for some positive constants $c_1,\,c_2$, then
\begin{equation}\label{eqn:cofactor elliptic}
\frac{ c_1^2}{c_2} Id\leq M(A) \leq \frac{ c_2^2}{c_1} Id.
\end{equation}
Hence, from \eqref{eqn:rho-dif}, \eqref{eqn:det-deriv}, \eqref{est:d2}, and \eqref{eqn:cofactor
elliptic}, it follows that
\begin{equation}\label{eqn:dg-reg}
\frac{ \rho_s - \rho_t}{s-t} = \sum_{i,j=1}^2 \biggl(\int_0^1M_{ij}( \tau
\nabla^2  P_s^*+ (1-\tau) \nabla^2 P_t^*) \, d\tau\biggr) \, \partial _{ij}\biggl(\frac{
P_s^* -P_t^*}{s-t}\biggr),
 \end{equation}
with
\[
\frac{c_1^2}{c_2} Id \le \int_0^1 M_{ij} ( \tau \nabla^2 P_s^*+ (1-\tau)
\nabla^2 P_t^*) \, d\tau\le \frac{c_2^2}{c_1} Id
\]
Since $\nabla^2 P_t^*$ is smooth in space, uniformly on $[0,T]$,
by classical elliptic regularity theory\footnote{Note that equation \eqref{eqn:rho-dif} is well defined on $\T^2$
since $P^*_t-P^*_s$ is $\Z^2$-periodic. We also observe that $P^*_t-P^*_s$ has average
zero on $\T^2$.} it follows that for any $k \in \N$ and $\alpha \in (0,1)$
there exists
a constant $C:= C( \| (\rho_s - \rho_t) /(s-t) \|_{C^{k,\alpha}(\T^2
\times [0,T])})$ such that
\begin{equation*}
  \left\|\frac{ P_s^*(x) -P_t^*(x)}{s-t}\right\|_{C^{k+2,\alpha}(\T^2)}\leq C.
  \label{eqn:pot-est}
\end{equation*}
This proves point (i) in the statement. To prove the second part, we let $s \to t$ in
\eqref{eqn:dg-reg} to obtain
\begin{equation} \label{eqn:non-var}
\partial_t \rho_t= \sum_{i,j=1}^{2}M_{ij}( \nabla^2 P_t^*(x)) \,  \partial_t \partial_{ij} P_t^*(x).
\end{equation}
Taking into account the continuity equation and the well-known divergence-free property of the cofactor matrix
\[
\sum_{i} \partial_i M_{ij}( \nabla^2  {P_t}^*(x)) = 0,\qquad j=1,2,
\]
we can rewrite \eqref{eqn:non-var}  as
\[
-\nabla \cdot(U_t \rho_t) = \sum_{i,j=1}^{2}\partial_i\bigl(M_{ij}(
\nabla^2 P_t^*(x)) \,  \partial_t \partial_{j} P_t^*(x)\bigr).
\]
Hence, using \eqref{eqn:cof} and the Monge-Amp\`ere equation $\det(\nabla^2 P_t^*)=\rho_t$,
we finally get \eqref{ts:MAlin}.
\end{proof}

\begin{proof}[Proof of Proposition~\ref{prop:est}]
We closely follow the proof of \cite[Theorem 5.1]{Loe1}, and we split the proof in two parts.
In the first step we assume that
\begin{eqnarray}
&&\rho_t\in C^{\infty}(\T^2 \times \R ),\,\,U_t\in C^{\infty}(\T^2 \times \R;\R^2 )\,,
\label{eqn:loep-reg1}\\
&&0<\lambda \le \rho_t\le
\Lambda<\infty\,,\label{eqn:loep-reg2} \\
&&\partial_t
\rho_t+\nabla\cdot (U_t\rho_t)=0 \,,\label{eqn:loep-reg3}\\
&&
(\nabla P_t)_\sharp \L_{\T^2}=\rho_t\L_{\T^2},\label{eqn:loep-reg4}
\end{eqnarray}
and we prove that \eqref{ts:loep} holds for every $t\geq 0$.
In the second step we prove the general case through an approximation argument.

\medskip\emph{Step 1: The regular case.}
Let us assume that the regularity assumptions \eqref{eqn:loep-reg1},
\eqref{eqn:loep-reg2}, \eqref{eqn:loep-reg3}, \eqref{eqn:loep-reg4}
hold.
Moreover, up to adding a time dependent constant to $P_t$, we can assume
without loss of generality that $\int_{\T^2} P_t^*=0$ for all $t \geq 0$, so
that by Lemma~\ref{lemma:MAlin} we have $\partial_t P_t^* \in
C^\infty(\T^2)$. Fix $t\geq 0$.
Multiplying \eqref{ts:MAlin} by $\partial_t P_t^*$ and integrating
by parts, we get
\begin{equation}
\label{eqn:ma-div}
\begin{split}
 \int_{\T^2} \rho_t | (\nabla^2 P_t^*)^{-1/2}\partial_t \nabla P_t^*|^2\, dx
&=\int_{\T^2}  \rho_t \partial_t \nabla P_t^*\cdot (\nabla^2 P_t^*)^{-1} \partial_t \nabla P_t^* \,dx\\
& = -\int_{\T^2}  \rho_t  \partial_t \nabla P_t^* \cdot U_t \, dx.
\end{split}
\end{equation}
(Since the matrix $\nabla^2 {P_t}^*(x)$ is nonnegative, both its square root
and the square root of its inverse are well-defined.)
From Cauchy-Schwartz inequality it follows that the right-hand side of \eqref{eqn:ma-div}
can be rewritten and estimated with
\begin{equation}
\begin{split}\label{est:ma}
&-\int_{\T^2}  \rho_t  \partial_t \nabla P_t^*\cdot (\nabla^2 P_t^*)^{-1/2} (\nabla^2 P_t^*)^{1/2} U_t\,dx\\
& \leq \left( \int_{\T^2} \rho_t | (\nabla^2 P_t^*)^{-1/2}\partial_t
\nabla P_t^*|^2\, dx\right)^{1/2}  \left( \int_{\T^2}  \rho_t
|(\nabla^2 P_t^*)^{1/2}  U_t|^2\, dx \right)^{1/2}.
 \end{split}
\end{equation}
Moreover, the second factor in the right-hand side of \eqref{est:ma} can be estimated with
\begin{equation}\label{est:v}
\int_{\T^2}  \rho_t U_t \cdot \nabla^2 P_t^* U_t\,dx \leq
\max_{\T^2}\left( \rho_t |U_t|^2 \right) \int_{\T^2} |\nabla^2
P_t^*| \, dx.
\end{equation}
Hence, from \eqref{eqn:ma-div}, (\ref{est:ma}), and \eqref{est:v} it follows that
\begin{equation}\label{est:l2norms}
 \int_{\T^2} \rho_t | (\nabla^2 P_t^*)^{-1/2}\partial_t \nabla P_t^*|^2\, dx
 \leq\max_{\T^2}\left( \rho_t |U_t|^2 \right) \int_{\T^2} |\nabla^2 P_t^*|\, dx.
\end{equation}
We now apply Lemma~\ref{lemma:orlicz} with $a = |(\nabla^2
P_t^*)^{1/2}|$ and $b=  |(\nabla^2 P_t^*)^{-1/2}\partial_t \nabla
{P_t}^*(x)| $ to deduce the existence of a constant $C(k)$ such that
\begin{equation*} \begin{split}
|\partial_t \nabla P_t^*| \log_+^k(|\partial_t \nabla P_t^*|) &\leq
C(k)\left( |(\nabla^2 P_t^*)^{1/2}|^2 \log^{2k}_+(|(\nabla^2
P_t^*)^{1/2}|^2) +
| (\nabla^2 P_t^*)^{-1/2}\partial_t \nabla P_t^*|^2 \right)\\
&=C(k)\left( |\nabla^2 P_t^*|\log^{2k}_+(|\nabla^2 P_t^*|) +  |
(\nabla^2 P_t^*)^{-1/2}\partial_t \nabla P_t^*|^2\right).
\end{split}
\end{equation*}
Integrating the above inequality over $\T^2$ and using \eqref{est:l2norms}, we finally obtain
\begin{equation}\label{eqn: tsloepersmooth}
\begin{split}
&\int_{\T^2} \rho_t |\partial_t \nabla P_t^*| \log^k_{+} (|\partial_t \nabla P_t^*|) \, dx\\
&\leq C(k)\left( \int_{\T^2}\rho_t |\nabla^2 P_t^*| \log^{2k}_+(|\nabla^2 P_t^*|) \, dx
+ \int_{\T^2}\rho_t | (\nabla^2 P_t^*)^{-1/2}\partial_t \nabla P_t^*|^2\, dx\right)\\
&\leq C(k)\left( \int_{\T^2}\rho_t |\nabla^2 P_t^*|
\log^{2k}_+(|\nabla^2 P_t^*|) \, dx+ \max_{\T^2}\left( \rho_t
|U_t|^2 \right) \int_{\T^2} |\nabla^2 P_t^*|\,dx\right),
\end{split}
\end{equation}
which proves \eqref{ts:loep}.

\medskip
\emph{Step 2: The approximation argument.}
First of all, we extend the functions $\rho_t$ and $U_t$ for $t\leq 0$ by setting
$\rho_t=\rho_0$ and $U_t=0$ for every $t<0$. We notice that, with
this definition, $\rho_t$ solves the continuity equation with
velocity $U_t$ on $\re^2\times\re$.

Fix now $\sigma_1 \in C_c^{\infty}(\re^2)$, $\sigma_2 \in C_c^{\infty}(\re)$, define
the family of mollifiers $(\sigma^n)_{n\in \N}$ as $\sigma^n(x,t) := n^3\sigma_1(nx) \sigma_2(nt)$, and set
$$\rho^n:= \rho\ast\sigma^n, \quad\quad U^n(x) := \frac{(\rho U) \ast \sigma^n}{\rho \ast \sigma^n}. $$
Since $\lambda \leq \rho\leq \Lambda $ then
$$\lambda\leq\rho^n\leq \Lambda. $$
Therefore both $\rho^n$ and $U^n$ are well defined and satisfy
\eqref{eqn:loep-reg1}, \eqref{eqn:loep-reg2}, \eqref{eqn:loep-reg3}.
Moreover for every $t>0$ the function $\rho^n_t$ is $\z^2$-periodic
and it is a probability density  when restricted to $(0,1)^2$ (once
again we are identifying periodic functions with functions defined
on the torus). Let $P^n_t$ be the only convex function such that
$(\nabla P^n_t)_\sharp \L^2=\rho^n_t$ and its
its convex conjugate $P^{n*}_t$ satisfies $\int_{\T^2}P^{n*}_t=0$ for all $t \geq 0$.
Since $\rho^n_t \to \rho_t$ in
$L^1(\T^2)$ for any $t>0$ (recall that, by Theorem
\ref{thm:dualeq}(ii), $\rho_t$ is weakly continuous in time), from
standard stability results for Alexandrov solutions of Monge-Amp\`ere (see
for instance \cite{DeFi2}) it follows that
\begin{equation}\label{eqn:transp-conv}
\nabla P^{n*}_t \to \nabla P_t^* \quad\quad\mbox{in }L^1(\T^2)
\end{equation}
for any $t>0$. Moreover, by Theorems \ref{thm:transport-torus} and
\ref{thm:transport-torus reg}(ii), for every $k\in\N$ there exists a constant $C:=
C(\lambda, \Lambda,k)$ such that
$$
\int_{\T^2}\rho^n_t |\nabla^2 P^{n*}_t| \log^k_+(|\nabla^2
P^{n*}_t|) \, dx  \leq C,
$$
and by the stability theorem in the Sobolev topology
estabilished in \cite[Theorem 1.3]{DeFi2} it follows that
\begin{equation}\label{eqn:d2-conv-1}
 \int_{\T^2}\rho^n_t |\nabla^2 P^{n*}_t| \log^k_+(|\nabla^2 P^{n*}_t|) \, dx
 \to  \int_{\T^2}\rho_t |\nabla^2 P_t^*| \log^k_+(|\nabla^2 P_t^*|) \, dx,
 \end{equation}
\begin{equation}\label{eqn:d2-conv-2}
 \int_{\T^2} |\nabla^2 P^{n*}_t| \, dx \to  \int_{\T^2} |\nabla^2 P_t^*| \, dx .
\end{equation}
Finally, since the function $(w,t)\mapsto F(w,t)=|w|^2/t$ is convex
on $\R^2\times (0,\infty)$, by Jensen inequality we get
\begin{equation}\label{eqn:d2-conv-3}
\| \rho^n |U^n|^2\|_\infty=\|F(\rho^n U^n,\rho^n)\|_\infty \leq \|
\rho |U|^2\|_\infty.
\end{equation}
Let us fix $T>0$ and $\phi \in C^{\infty}_c((0,T))$ nonnegative. From the previous steps and Dunford-Pettis Theorem, it is clear that
$\phi(t)\rho^n_t \partial_t \nabla P^{n*}_t$ weakly converge
to $\phi(t)\rho_t\partial_t \nabla P_t^*$ in $L^1(\T^2\times (0,T))$. Moreover, since the function
$w\mapsto |w|\log_+^k(|w|/r)$ is convex for every $r\in (0,\infty)$
 we can apply Ioffe lower semicontinuity theorem \cite[Theorem 5.8]{AFP} 
 to the functions $\phi(t)\rho^n_t \partial_t \nabla P^{n*}_t$ and $\phi(t)\rho^n_t$ 
 to infer
\begin{equation}\label{eqn:time der}
\begin{split}
\int_0^T \phi(t)\int_{\T^2} \rho_t |\partial_t \nabla P_t^*| \log^k_{+} (|\partial_t \nabla P_t^*|) \, dx \, dt
\leq \liminf_{n\to\infty} \int_0^T \phi(t) \int_{\T^2} \rho^n_t
|\partial_t \nabla P^{n*}_t| \log^k_{+} (|\partial_t \nabla
P^{n*}_t|) \, dx \, dt.
\end{split}
\end{equation}
By Step 1 we can apply \eqref{ts:loep} to $\rho_t^n,U_t^n$. Taking \eqref{eqn:d2-conv-1},  \eqref{eqn:d2-conv-2},
\eqref{eqn:d2-conv-3} and \eqref{eqn:time der} into account, by Lebesgue dominated convergence theorem we obtain 
\begin{multline*}
\int_0^T\phi(t)\int_{\T^2} \rho_t |\partial_t \nabla P_t^*| \log^k_+ (|\partial_t \nabla P_t^*|) \, dx \, dt\\
\leq C(k) \int_0^T \phi(t) \left( \int_{\T^2}\rho_t |\nabla^2 P_t^*|
\log^{2k}_+(|\nabla^2 P_t^*|) \, dx+ {\rm ess}\sup_{\T^2}\left(
\rho_t |U_t|^2 \right) \int_{\T^2} |\nabla^2 P_t^*| \, dx\right) \, dt.
\end{multline*}
Since this holds for every $\phi \in C^{\infty}_c((0,T))$ nonnegative, we obtain the desired result.
\end{proof}

It is clear from the proof of Proposition~\ref{prop:est} that the
particular coupling between the velocity field $U_t$ and the
transport map $P_t$ is not used. Actually, using
Theorem~\ref{thm:transport-torus reg}(ii) and
\cite[Theorem~1.3]{DeFi2}, and arguing again as in the proof of
\cite[Theorem~5.1]{Loe1}, the following more general statement holds
(compare with \cite[Theorem 5.1, Equations (27) and (29)]{Loe1}):

\begin{proposition}\label{prop:time-reg}
Let $\rho_t$ and $v_t$ be such that $0<\lambda\leq \rho_t \leq
\Lambda<\infty$, $v_t \in L^\infty_{\rm loc}(\T^2 \times
[0,\infty),\R^2)$, and
 $$\partial_t \rho_t+\nabla\cdot (v_t\rho_t)=0.$$
Assume that $\int_{\T^2}\rho_t\,dx=1$ for all $t\geq 0$, let $P_t$
be a convex function such that
$$(\nabla P_t)_\sharp \L_{\T^2} = \rho_t \L_{\T^2} ,$$
and denote by $P^*_t$ its convex conjugate.

Then  $\nabla P_t$ and $\nabla P_t^*$ belong to $W^{1,1}_{\rm
loc}(\T^2 \times [0,\infty);\R^2)$. Moreover, for every
$k\in \N$ there exists a constant $C(k)$  such that, for almost every $t \geq 0$,
\begin{multline}\label{eqn:3.61}
\int_{\T^2} \rho_t |\partial_t \nabla P_t^*| \log^k_+ (|\partial_t \nabla P_t^*|) \, dx\\
\leq C(k) \left( \int_{\T^2}\rho_t |\nabla^2 P_t^*|
\log^{2k}_+(|\nabla^2 P_t^*|) \, dx+ {\rm ess}\sup_{\T^2}\left(
\rho_t |v_t|^2 \right) \int_{\T^2} |\nabla^2 P_t^*| \, dx\right),
\end{multline}
\begin{multline}\label{eqn:3.62}
\int_{\T^2} |\partial_t \nabla P_t| \log^k_+ (|\partial_t \nabla P_t|) \, dx\\
\leq C(k) \left( \int_{\T^2} |\nabla^2 P_t|
\log^{2k}_+(|\nabla^2 P_t|) \, dx+ {\rm ess}\sup_{\T^2}\left(
\rho_t |v_t|^2 \right) \int_{\T^2} |\nabla^2 P_t^*| \, dx\right).
\end{multline}
\end{proposition}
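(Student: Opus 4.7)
The plan is to mimic the proof of Proposition~\ref{prop:est}, noting two things. First, that proof never uses the specific form $U_t=J(x-\nabla P_t^*(x))$: only the continuity equation $\partial_t\rho_t+\nabla\cdot(\rho_t v_t)=0$ and the push-forward identity $(\nabla P_t)_\sharp\L_{\T^2}=\rho_t\L_{\T^2}$ enter the argument (in particular, the linearized Monge-Amp\`ere equation \eqref{ts:MAlin} is insensitive to the choice of $U_t$). Therefore replacing $U_t$ by the generic $v_t$ in Step~1 and Step~2 of Proposition~\ref{prop:est} yields \eqref{eqn:3.61} verbatim. Along the way, in the smooth approximation, one obtains the intermediate bound (the analog of \eqref{est:l2norms})
\[
\int_{\T^2}\rho_t\,\bigl|(\nabla^2 P_t^*)^{-1/2}\partial_t\nabla P_t^*\bigr|^2\,dx\;\leq\;{\rm ess}\sup_{\T^2}\!\bigl(\rho_t|v_t|^2\bigr)\int_{\T^2}|\nabla^2 P_t^*|\,dx,
\]
which I will use as the key ingredient for \eqref{eqn:3.62}.

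To deduce \eqref{eqn:3.62}, I would again start in the smooth regime. Differentiating $\nabla P_t^*\circ\nabla P_t=Id$ in $t$, and using $\nabla^2 P_t(x)=[\nabla^2 P_t^*(\nabla P_t(x))]^{-1}$, gives
\[
\partial_t\nabla P_t(x)=-[\nabla^2 P_t^*(\nabla P_t(x))]^{-1}\,[\partial_t\nabla P_t^*](\nabla P_t(x)).
\]
With the change of variable $y=\nabla P_t(x)$, whose Jacobian is $\det\nabla^2 P_t(x)=1/\rho_t(y)$ by Monge-Amp\`ere, the left-hand side of \eqref{eqn:3.62} rewrites as
\[
\int_{\T^2}\rho_t(y)\,\bigl|A^{-1}w\bigr|\,\log^k_+\bigl|A^{-1}w\bigr|\,dy,\qquad A:=\nabla^2 P_t^*,\ w:=\partial_t\nabla P_t^*.
\]
Applying Lemma~\ref{lemma:orlicz} with the choice $a=|A^{-1/2}|$, $b=|A^{-1/2}w|$ (so that $ab\geq|A^{-1}w|$) gives the pointwise inequality
\[
|A^{-1}w|\log^k_+|A^{-1}w|\leq C(k)\bigl(|A^{-1/2}w|^2+|A^{-1}|\log^{2k}_+|A^{-1}|\bigr).
\]
Integrating against $\rho_t$, the first term is controlled by the intermediate $L^2$ bound recalled above, while the second, after the inverse change of variables $x=\nabla P_t^*(y)$, becomes $\int_{\T^2}|\nabla^2 P_t|\log^{2k}_+|\nabla^2 P_t|\,dx$. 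Together these yield \eqref{eqn:3.62} in the smooth case.

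The passage from the smooth case to the general case is a verbatim copy of Step~2 of the proof of Proposition~\ref{prop:est}: mollify $(\rho,v)$ to smooth data $(\rho^n,v^n)$ that still satisfy the continuity equation and the bound $\lambda\leq\rho^n\leq\Lambda$ (with $\|\rho^n|v^n|^2\|_\infty\leq\|\rho|v|^2\|_\infty$ by Jensen, as in \eqref{eqn:d2-conv-3}); the stability result \cite[Theorem 1.3]{DeFi2} together with Theorem~\ref{thm:transport-torus reg}(ii) give the convergence of the $L\log^k L$ norms of $|\nabla^2 P_t^n|$ and of $\rho_t^n|\nabla^2 P_t^{n\,*}|$; finally Ioffe's lower semicontinuity theorem \cite[Theorem 5.8]{AFP}, applied to the convex integrand $w\mapsto|w|\log^k_+|w|$, handles the passage to the limit in the left-hand sides of both \eqref{eqn:3.61} and \eqref{eqn:3.62}.

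The only genuinely new ingredient compared to Proposition~\ref{prop:est} is the choice $a=|A^{-1/2}|$, $b=|A^{-1/2}w|$ in Lemma~\ref{lemma:orlicz}: this is exactly what allows one to recycle the $L^2$-type bound produced for \eqref{eqn:3.61}, and hence to avoid deriving a separate linearized Monge-Amp\`ere equation for $P_t$ itself (which would have a less convenient right-hand side, with $\partial_t\nabla P_t$ and $\nabla\rho_t\circ\nabla P_t$ appearing implicitly). Once this pointwise trick is in place, both estimates follow by the same template used for Proposition~\ref{prop:est}, so I expect no conceptual obstacle --- only the care of tracking the change of variables $y=\nabla P_t(x)$ correctly, since the RHS of \eqref{eqn:3.62} mixes quantities in the source and target spaces.
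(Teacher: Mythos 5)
Your argument is correct and is essentially the paper's own proof: both rely on the identity $\partial_t\nabla P_t=-\bigl([\nabla^2 P_t^*]\circ\nabla P_t\bigr)^{-1}[\partial_t\nabla P_t^*]\circ\nabla P_t$ (obtained by differentiating $\nabla P_t^*\circ\nabla P_t=Id$), on the change of variables $y=\nabla P_t(x)$ with Jacobian $1/\rho_t$, and on applying Lemma~\ref{lemma:orlicz} with $a=|(\nabla^2 P_t)^{1/2}|$ and $b=|(\nabla^2 P_t)^{-1/2}\partial_t\nabla P_t|$ (your choice $a=|A^{-1/2}|$, $b=|A^{-1/2}w|$ is exactly this pair read in dual variables), so as to recycle the $L^2$ bound \eqref{est:l2norms} already established for $\nabla P_t^*$. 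The only difference is cosmetic: you change variables in the left-hand side of \eqref{eqn:3.62} and apply Lemma~\ref{lemma:orlicz} in the dual coordinates, whereas the paper first transports \eqref{est:l2norms} to physical coordinates (obtaining the bound on $\int|(\nabla^2 P_t)^{-1/2}\partial_t\nabla P_t|^2$) and then applies Lemma~\ref{lemma:orlicz} there; the approximation step is likewise the verbatim Step~2 of Proposition~\ref{prop:est} in both treatments.
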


\begin{proof}
We just give a short sketch of the proof. Equation \eqref{eqn:3.61}
can be proved following the same line of the proof of Proposition
\ref{prop:est}. To prove \eqref{eqn:3.62} notice that by the
approximation argument in the second step of the proof of
Proposition~\ref{prop:est} we can assume that the velocity and the
density are smooth and hence, arguing as in Lemma~\ref{lemma:MAlin},
we have that $P_t,\, P_t^* \in {\rm Lip}_{\rm loc}([0,\infty)
,C^\infty(\T^2))$. Now, changing variables in the   the left hand
side of \eqref{est:l2norms} we get
\begin{equation}\label{pippo}
\int_{\T^2} \left|\bigl([\nabla^2 P_t^*](\nabla P_t)\bigr)^{-1/2}[\partial_t\nabla
P_t^*](\nabla P_t)\right|^2\, dx
 \leq\max_{\T^2}\left( \rho_t |v_t|^2 \right) \int_{\T^2} |\nabla^2 P_t^*|\, dx.
\end{equation}
Taking into account the identities
\[
[\nabla^2 P_t^*](\nabla P_t)=\big(\nabla^2 P_t\big)^{-1}
\qquad\text{and}\qquad
[\partial_t \nabla P_t^*](\nabla P_t)+[\nabla^2 P_t^*](\nabla P_t)\partial_t \nabla P_t=0
\]
which follow differentiating with respect to time and space  $\nabla
P^*_t\circ \nabla P_t =Id$, Equation \eqref{pippo} becomes
\[
\int_{\T^2}  | (\nabla^2 P_t)^{-1/2}\partial_t \nabla P_t|^2\, dx
 \leq\max_{\T^2}\left( \rho_t |v_t|^2 \right) \int_{\T^2} |\nabla^2 P_t^*|\, dx.
\]
At this point the proof of \eqref{eqn:3.62} is obtained arguing as in Proposition \ref{prop:est}.
\end{proof}

\section{Existence of an Eulerian solution}\label{sect:proof thm}

In this section we prove Theorem~\ref{thm:main}.

\begin{proof}[Proof of Theorem~\ref{thm:main}]
First of all notice that, thanks to Theorem~\ref{thm:transport-torus
reg}(i) and Proposition~\ref{prop:est}, it holds $|\nabla^2
P_t^*|,\,|\partial_t \nabla P^*_t|\in L_{\rm
loc}^\infty([0,\infty),L^1(\T^2))$. Moreover, since $(\nabla
P_t)_\sharp \L_{\T^2}=\rho_t\L_{\T^2}$, it is immediate to check the
function $u$ in \eqref{eqn:velocity} is well-defined\footnote{Note
that the composition of $\nabla^2P_t^*$ with $\nabla P_t$ makes
sense. Indeed, by the conditions $(\nabla P_t)_\sharp \L_{\T^2} =
\rho_t \L_{\T^2}\ll\L_{\T^2}$, if we change the value of
$\nabla^2P_t^*$ in a set of measure zero, also $[\nabla^2P_t^*](\nabla
P_t)$ will change only on a set of measure zero.} and $|u|$ belongs
to $L_{\rm loc}^\infty([0,\infty),L^1(\T^2))$.

Let $\phi\in C^\infty_c(\re^2\times[0,\infty))$ be a $\z^2$-periodic
function in space and let us consider the function $\varphi:
\re^2\times [0,\infty)\to\re^2$ given by
\begin{equation}\label{defn:test}
\varphi_t(y) := J(y-\nabla P_t^*(y)) \phi_t(\nabla P_t^*(y)).
\end{equation}
By Theorem \ref{thm:transport-torus} and the periodicity of $\phi$,
$\varphi_t(y)$ is $\z^2$-periodic in the space variable. Moreover
$\varphi_t$ is compactly supported in time, and
Proposition~\ref{prop:est} implies that $\varphi\in
W^{1,1}(\re^2\times[0,\infty))$. So, by Lemma~\ref{rmk:scontr},
each component of the function $\varphi_t(y)$ is an admissible test
function for \eqref{eqn:sg-dual-weak}. For later use, we write down
explicitly the derivatives of $\varphi$:
\begin{equation}\label{eqn:ber1}
\begin{cases}
\partial_t\varphi_t(y)=-J[\partial_t \nabla P_t^*](y)\phi_t(\nabla P_t^*(y))+
J(y-\nabla P_t^*(y))[\partial_t\phi_t](\nabla P_t^*(y))+\\
\qquad \,\,\,\,\,\,\,\,\,\,\,\,\,+ J(y-\nabla
P_t^*(y))\bigl([\nabla\phi_t](P_t^*(y))\cdot
\partial_t\nabla P_t^*(y)\bigr),
\\
\nabla\varphi_t(y)=J(Id-\nabla^2P_t^*(y))\phi_t(\nabla P_t^*(y))+
J(y-\nabla P_t^*(y))\otimes \bigl([\nabla^T\phi_t](P_t^*(y))\nabla^2P_t^*(y)\bigr).
\end{cases}
\end{equation}
Taking into account that $(\nabla P_t)_\sharp \L_{\T^2}=\rho_t\L_{\T^2}$
and that $[\nabla P_t^*](\nabla P_t(x))=x$ almost everywhere, we can rewrite the
boundary term in \eqref{eqn:sg-dual-weak} as
\begin{equation}
\label{eqn:test-1} \int_{\T^2} \varphi_0(y) \rho_0(y) \, dy
 =  \int_{\T^2} J (\nabla {P_0}(x)-x)\phi_0(x) \, dx\\
 =  \int_{R^2} J \nabla {p_0}(x)\phi_0(x) \, dx.
\end{equation}
In the same way, since $U_t(y)=J(y-\nabla P_t^*(y))$, we can use
\eqref{eqn:ber1} to rewrite the other term as
\begin{equation}
\label{eqn:test-12}
\begin{split}
&\int_0^\infty \int_{\T^2} \Big\{ \partial_t \varphi_t(y) +
\nabla \varphi_t(y) \cdot U_t(y)\Big\} \rho_t(y)\, dy \, dt\\
& = \int_0^\infty  \int_{\T^2}\Big\{ -J[\partial_t \nabla P_t^*](
\nabla P_t(x))\phi_t(x) + J( \nabla P_t(x) - x)\partial_t\phi_t(x)\\
&
+J( \nabla P_t(x) - x) \bigl(\nabla\phi_t(x)\cdot [\partial_t \nabla P_t^*]( \nabla P_t(x))\bigr)\\
&+ \bigl[J (Id-\nabla^2 P_t^*( \nabla P_t(x))) \phi_t(x) + J( \nabla
P_t(x) - x) \otimes \bigl(\nabla^T\phi_t(x)\nabla^2 P_t^*( \nabla
P_t(x))\bigr)\bigr]J(\nabla P_t(x)- x) \Big\}\, dx \, dt
\end{split}
\end{equation}
which, taking into account the formula \eqref{eqn:velocity} for $u$,
after rearranging the terms turns out to be equal to
\begin{equation}\label{eqn:test-2}
\int_0^{\infty} \int_{\T^2} \left\{ J\nabla p_t(x) \bigl(
\partial_t \phi_t(x) + u_t(x)\cdot \nabla \phi_t(x)\bigr)
+\bigl(-\nabla p_t(x)-Ju_t(x) \bigr) \phi_t(x)\right\}\, dx \, dt.
\end{equation}
Hence, combining (\ref{eqn:test-1}), \eqref{eqn:test-12},
(\ref{eqn:test-2}), and (\ref{eqn:sg-dual-weak}), we obtain the validity of
(\ref{eqn:sg1-weak}).

Now we prove (\ref{eqn:sg2-weak}). Given $\phi\in
C_c^\infty(0,\infty)$ and a $\z^2$-periodic function $\psi\in
C^\infty(\R^2)$, let us consider the function $\varphi:\re^2\times
[0,\infty)\to \re$ defined by
\begin{equation}
\varphi_t(y) := \phi(t)\psi(\nabla {P_t}^*(y)).
\label{defn:test-div}
\end{equation}
As in the previous case, we have that $\varphi$ is $\z^2$-periodic
in the space variable and $\varphi\in W^{1,1}(\T^2\times [0,\infty))$, so
we can use $\varphi$ as a test function in
(\ref{eqn:sg2-weak}). Then, identities analogous to
\eqref{eqn:ber1} yield
\begin{equation*}
\begin{split}
0& = \int_0^{\infty}\int_{\T^2} \left\{ \partial_t \varphi_t(y) +
\nabla \varphi_t(y) \cdot U_t(y)\right\} \rho_t(y)\, dy \, dt\\
& =\int_0^{\infty} \phi'(t)\int_{\T^2} \psi(x)\, dx \, dt\\
&\phantom{A}
 +  \int_0^{\infty} \phi(t)
 \int_{\T^2}\Big\{\nabla\psi(x)\cdot \partial_t \nabla {P_t}^*( \nabla P_t(x))
  +\nabla^T \psi(x)\nabla^2 P_t^*( \nabla P_t(x))J( \nabla P_t(x) - x) \Big\}\, dx \, dt\\
& = \int_0^{\infty} \phi(t) \int_{\T^2} \nabla \psi(x)\cdot u_t(x)
\, dx \, dt.
\end{split}
\end{equation*}
Since $\phi$ is arbitrary we obtain
$$
\int_{\T^2 } \nabla \psi(x)\cdot u_t(x) \, dx =0 \qquad \mbox{for
a.e. $t>0$.}
$$
By a standard density argument it follows that the above equation holds
outside a negligible set of times
independent of the test function $\psi$, thus proving \eqref{eqn:sg2-weak}.
\end{proof}

\section{Existence of a Regular Lagrangian Flow for the semigeostrophic velocity field}
\label{sect:RLF}

We start with the definition of Regular Lagrangian Flow for a given
vector field $b$, inspired by \cite{AM1,AM2}:

\begin{definition}\label{def:wl}
Given a Borel, locally integrable vector field $b:\T^2\times (0,\infty)\to\R^2$, we say
that a Borel function $F:\T^2\times [0,\infty)\to\T^2$ is a
\emph{Regular Lagrangian Flow (in short RLF) associated to $b$} if the following
two conditions are satisfied.
\begin{enumerate}
\item[(a)] For almost every $x\in\T^2$ the map $t\mapsto F_t(x)$ is locally
absolutely continuous in $[0,\infty)$ and
\begin{equation} \label{def:wl1}
F_t(x) =x+\int_0^t b_s(F_s(x))dx \quad \quad \forall t\in
[0,\infty).
\end{equation}
\item[(b)] For every $t\in [0,\infty)$ it holds $(F_t)_\#\L_{\T^2}\leq
C\L_{\T^2}$, with $C\in [0,\infty)$ independent of $t$.
\end{enumerate}
\end{definition}

A particular class of RLFs is the collection of
the measure-preserving ones, where (b) is strengthened to
\[
(F_t)_\#\L_{\T^2}=\L_{\T^2}\qquad\forall t\geq 0.
\]
Notice that \emph{a priori} the above definition depends on the choice of
the representative of $b$ in the Lebesgue equivalence class, since
modifications of $b$ in Lebesgue negligible sets could destroy
condition (a). However, a simple argument based on Fubini's theorem
shows that the combination of (a) and (b) is indeed invariant (see
\cite[Section~6]{AM1}): in other words, if $b=\widetilde b$ a.e. in
$\T^2\times (0,\infty)$, then every RLF
associated to $b$ is also a RLF associated to
$\widetilde b$.

We show existence of a measure-preserving RLF associated
to the vector field $u$ defined by
\begin{equation}\label{eqn:velocity1}
u_t(x)=[\partial_t \nabla P^*_t](\nabla P_t(x))
+[\nabla^2P^*_t](\nabla P_t(x))J(\nabla P_t(x)-x),
\end{equation}
where $P_t$ and $P_t^*$ are as in Theorem~\ref{thm:main}. Recall
also that, under these assumptions, $|u|\in L^\infty_{\rm
loc}([0,\infty),L^1(\T^2))$.

Existence for weaker notion of Lagrangian flow of the
semigeostrophic equations was proved by Cullen and Feldman, see
\cite[Definition~2.4]{cufe}, but since at that time the results of
\cite{DepFi} were not available the velocity could not be defined,
not even as a function. Hence, they had to adopt a more indirect
definition. We shall prove indeed that their flow is a flow
according to Definition~\ref{def:wl}. We discuss the uniqueness
issue in the last section.

\begin{theorem}
\label{thm:RLF}
Let us assume that the hypotheses of Theorem~\ref{thm:main} are
satisfied, and let $P_t$ and $P_t^*$ be the convex functions such
that
\[
(\nabla P_t)_\sharp \L_{\T^2}=\rho_t \L_{\T^2}, \qquad (\nabla
P^*_t)_\sharp\rho_t \L_{\T^2}=\L_{\T^2}.
\]
Then, for $u_t$ given by \eqref{eqn:velocity1} there exists a
measure-preserving RLF $F$ associated to $u_t$. Moreover
$F$ is invertible in the sense that for all $t\geq 0$ there exist
Borel maps $F^*_t$ such that $F^*_t(F_t)=Id$ and $F_t(F^*_t)=Id$
a.e. in $\T^2$.
\end{theorem}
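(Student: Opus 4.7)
The plan is to construct the RLF $F_t$ by lifting a Lagrangian flow from the dual picture through the optimal transport maps. The bounded vector field $U_t(y)=J(y-\nabla P_t^*(y))$ is divergence-free (indeed $\nabla\cdot U_t=\partial_1\partial_2 P_t^*-\partial_2\partial_1 P_t^*=0$ weakly on $\T^2$) and, by Theorem~\ref{thm:transport-torus reg}(ii), belongs to $L^\infty_{\rm loc}((0,\infty);W^{1,1}(\T^2;\R^2))$. Hence the DiPerna--Lions/Ambrosio theory of Regular Lagrangian Flows applies and yields a unique measure-preserving RLF $\Phi_t$ for $U_t$, together with a backward RLF $\Phi_t^{-1}$ satisfying $\Phi_t\circ\Phi_t^{-1}=\Phi_t^{-1}\circ\Phi_t=Id$ a.e. Moreover, since $\rho_t\in L^\infty$ is a bounded distributional solution of the continuity equation $\partial_t\rho_t+\nabla\cdot(\rho_t U_t)=0$, uniqueness in this class forces $(\Phi_t)_\sharp(\rho_0\L_{\T^2})=\rho_t\L_{\T^2}$ for every $t\geq 0$.

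Having $\Phi_t$ in hand, I would define
\[
F_t(x):=\nabla P_t^*\bigl(\Phi_t(\nabla P_0(x))\bigr),\qquad F_t^*(y):=\nabla P_0^*\bigl(\Phi_t^{-1}(\nabla P_t(y))\bigr),
\]
which are well-defined Borel maps $\T^2\to\T^2$ thanks to the periodicity \eqref{ts:transp-per}. A telescoping pushforward computation using $(\nabla P_0)_\sharp\L_{\T^2}=\rho_0\L_{\T^2}$, $(\Phi_t)_\sharp(\rho_0\L_{\T^2})=\rho_t\L_{\T^2}$, and $(\nabla P_t^*)_\sharp(\rho_t\L_{\T^2})=\L_{\T^2}$ immediately gives $(F_t)_\sharp\L_{\T^2}=\L_{\T^2}$, which is the measure-preserving property. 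Combined with the a.e.\ identities $\nabla P_t\circ\nabla P_t^*=Id$ and $\nabla P_t^*\circ\nabla P_t=Id$ (valid since $\rho_t>0$ a.e., by Theorem~\ref{thm:transport-torus}), this also shows that $F_t^*$ is an a.e.\ inverse of $F_t$.

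The main obstacle is the ODE itself. Writing $y_t:=\Phi_t(\nabla P_0(x))$ and differentiating $F_t(x)=\nabla P_t^*(y_t)$ formally yields
\[
\dot F_t(x)=[\partial_t\nabla P_t^*](y_t)+[\nabla^2 P_t^*](y_t)\,U_t(y_t),
\]
and since $\nabla P_t(F_t(x))=y_t$ and $U_t(y_t)=J(y_t-F_t(x))$, the right-hand side matches $u_t(F_t(x))$ by the very definition \eqref{eqn:velocity1} of $u_t$. The difficulty is that $\nabla P_t^*$ is only $W^{1,1}$ and $U_t$ is not Lipschitz, so this chain rule needs justification. My plan is to regularise at the initial level: mollify $\rho_0$ to obtain smooth positive densities $\rho_0^\ep$ with the same upper and lower bounds, and apply Theorem~\ref{thm:dualeq} together with Theorem~\ref{thm:transport-torus reg}(iii) to obtain smooth dual solutions $\rho_t^\ep,U_t^\ep,P_t^\ep,P_t^{*,\ep}$; for these, the classical flow $\Phi_t^\ep$ and the smooth map $F_t^\ep:=\nabla P_t^{*,\ep}\circ\Phi_t^\ep\circ\nabla P_0^\ep$ satisfy $\dot F_t^\ep=u_t^\ep(F_t^\ep)$ classically. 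The passage to the limit then relies on three ingredients: the $W^{1,1}$ stability of Alexandrov solutions from \cite{DeFi2} already invoked in Proposition~\ref{prop:est}, giving $\nabla^2 P_t^{*,\ep}\to\nabla^2 P_t^*$ strongly in $L^1$; the analogous $L^1$-convergence of $\partial_t\nabla P_t^{*,\ep}$ coming from the a priori bound in Proposition~\ref{prop:est}; and Ambrosio's stability theorem for RLFs, yielding $\Phi_t^\ep\to\Phi_t$, and hence $F_t^\ep\to F_t$, in measure locally uniformly in $t$. Passing to the limit in the integrated ODE $F_t^\ep(x)=x+\int_0^t u_s^\ep(F_s^\ep(x))\,ds$ produces the required identity $F_t(x)=x+\int_0^t u_s(F_s(x))\,ds$ for a.e.\ $x$ and every $t\geq 0$, completing the proof.
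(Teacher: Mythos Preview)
Your construction of $F_t=\nabla P_t^*\circ\Phi_t\circ\nabla P_0$ and the verification of the measure-preserving and invertibility properties coincide with the paper's argument. The divergence is in how you justify the ODE \eqref{def:wl1}, and there your regularisation scheme has a genuine gap.

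You propose to mollify the \emph{initial datum} $\rho_0$ and then invoke Theorem~\ref{thm:dualeq} together with Theorem~\ref{thm:transport-torus reg}(iii) to produce smooth dual solutions $\rho_t^\ep,U_t^\ep,P_t^\ep$. But Theorem~\ref{thm:dualeq} gives only $\lambda\le\rho_t^\ep\le\Lambda$ and weak continuity in time; it says nothing about higher regularity of $\rho_t^\ep$, and Theorem~\ref{thm:transport-torus reg}(iii) needs $\rho_t^\ep\in C^{k,\alpha}$ as a \emph{hypothesis}, not as a conclusion. Propagation of smoothness for the dual semigeostrophic system is not established anywhere in the paper (and is in fact nontrivial), so you cannot assert that $F_t^\ep$ satisfies the ODE classically. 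A second difficulty, even granting smoothness, is that uniqueness for \eqref{eqn:dualsystem} is listed as open in Section~\ref{sect:open pbs}; hence there is no mechanism forcing $\rho_t^\ep\to\rho_t$ (and thus $P_t^{*,\ep}\to P_t^*$, $\Phi_t^\ep\to\Phi_t$) for the \emph{given} solution. Finally, Proposition~\ref{prop:est} gives only a uniform $L\log^k L$ bound on $\partial_t\nabla P_t^{*,\ep}$, hence weak $L^1$ compactness; this is not enough to pass to the limit in $\int_0^t u_s^\ep(F_s^\ep(x))\,ds$ when $F_s^\ep$ varies as well.

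The paper avoids all three obstacles by a much more elementary device: it keeps the dual flow $G=\Phi$ \emph{fixed} and mollifies the space--time map $\nabla P^*$ itself, setting $Q^n:=(\nabla P^*)\ast\sigma^n$. Since $G_t$ is measure-preserving, the strong $W^{1,1}$ convergence $Q^n\to\nabla P^*$ transfers to strong $L^1$ convergence of $Q^n(G_t(\cdot)),\,[\partial_tQ^n](G_t(\cdot)),\,[\nabla Q^n](G_t(\cdot))$; along a subsequence this holds for a.e.\ fixed $x$ in $L^1(0,T)$. For such $x$ the map $t\mapsto Q^n_t(G_t(x))$ is absolutely continuous with classical chain-rule derivative, and one passes to the limit in the integrated identity directly, using only that $U_t(G_t(x))$ is bounded. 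No smoothness of approximate \emph{solutions}, no stability of approximate \emph{flows}, and no uniqueness for the dual problem are needed.
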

\begin{proof}
Let us consider the velocity field in the dual variables $U_t(x)=J(x-\nabla P_t^*(x))$.
Since $P_t^*$ is convex, $U_t \in BV(\T^2;\R^2)$ uniformly in time (actually, by Theorem \ref{thm:transport-torus reg}(ii)
$U_t \in W^{1,1}(\T^2;\R^2)$). Moreover $U_t$ is divergence-free.
Hence, by the theory of Regular Lagrangian Flows associated to $BV$ vector fields
\cite{AM1,AM2}, there exists a unique\footnote{
The uniqueness of Regular Lagrangian Flows has to be understood in the following way:
if $G_1,G_2:\T^2\times [0,\infty)\to\T^2$ are two RLFs associated to $U$, then
the integral curves $G_1(\cdot,x)$ and $G_2(\cdot,x)$ are equal for $\L^2$-a.e. $x$.}
measure-preserving RLF $G:\T^2\times [0,\infty)\to\T^2$ associated to $U$.

We now define\footnote{Observe that the definition of $F$ makes sense.
Indeed, by Theorem \ref{thm:transport-torus reg}(i),
both maps $\nabla P_0$ and $\nabla P^*_t$ are H\"older continuous in space. Morever, by the weak continuity
in time of $t \mapsto \rho_t$ (Theorem \ref{thm:dualeq}(ii))
and the stability results for Alexandrov solutions of Monge-Amp\`ere, $\nabla P^*$ is continuous both in space and time.
Finally, since $(\nabla P_0)_\sharp \L_{\T^2} \ll\L_{\T^2}$, if we change the value of
$G$ in a set of measure zero, also $F$ will change only on a set of measure zero.
}
\begin{equation}
\label{eq:def F}
F_t(y):= \nabla P^*_t(G_t(\nabla P_0(y))).
\end{equation}
The validity of property (b) in Definition \ref{def:wl} and the invertibility of $F$
follow from the same arguments of
\cite[Propositions 2.14 and 2.17]{cufe}.
Hence we only have
to show that property (a) in Definition~\ref{def:wl} holds.

Let us define $Q^n:= B\ast\sigma^n$, where $B$ is a Sobolev and uniformly continuous extension of
$\nabla P^*$ to $\T^2\times\R$, and $\sigma^n$ is a standard family
of mollifiers in $\T^2\times\R$.  It is well known that $Q^n \to \nabla P^*$
locally uniformly and
in the strong topology of $W_{\rm loc}^{1,1}(\T^2\times[0,\infty))$. Thus,
using the measure-preserving property of $G_t$, for all $T>0$ we get
\begin{equation*}
\begin{split}
0 &= \lim_{n\to \infty} \int_{\T^2}  \int_0^T  \Big\{ | Q^n_t- \nabla P_t^*
|
+ |\partial_t Q^n_t - \partial_t \nabla P_t^*| + |\nabla Q^n_t - \nabla^2 P_t^*| \Big\} \, dy\, dt.\\
&= \lim_{n\to \infty} \int_{\T^2}  \int_0^T \Big\{ | Q^n_t(G_t) -
\nabla P_t^*(G_t) | + |[\partial_t Q^n_t](G_t) - [\partial_t \nabla P_t^*](G_t) | +
|[\nabla Q^n_t](G_t) - [\nabla^2 P_t^*](G_t) | \Big\} \, dx\, dt.
\end{split}
\end{equation*}
Up to a (not re-labeled) subsequence the previous convergence is
pointwise in space, namely, for almost every $x\in \T^2$,
\begin{equation}
\begin{split}
\int_0^T \Big\{ | Q^n_t(G_t(x)) - \nabla P_t^*(G_t(x)) | + |[\partial_t
Q^n_t](G_t(x)) &- [\partial_t \nabla P_t^*](G_t(x)) |\\
 &+ |[\nabla
Q^n_t](G_t(x)) - [\nabla^2 P_t^*](G_t(x)) | \Big\} \, dt\rightarrow 0.
\end{split}
\label{eqn:conv-x-fix}
\end{equation}
Hence, since $G$ is a RLF and by assumption
\[
(\nabla P_0)\L_{\T^2}\ll \L_{\T^2},
\]
for almost every $y$ we have that \eqref{eqn:conv-x-fix} holds at
$x=\nabla P_0(y)$, and
the function $t\mapsto G_t(x)$ is absolutely
continuous on $[0,T]$, with derivative given by
\begin{equation*}
 \frac{d}{dt} G_t(x) = U_t(G_t(x))=J(G_t(x)-\nabla P_t^*(G_t(x))) \qquad  \mbox{for a.e. }t\in[0,T].
 \end{equation*}
Let us fix such an $y$. Since $Q^n$ is smooth, the function
$Q^n_t(G_t(x))$ is absolutely continuous in $[0,T]$ and its time
derivative is given by
\begin{equation*}
\frac{d}{dt} \bigl(Q^n_t(G_t(x))\bigr) = [\partial_t Q^n_t](G_t(x))+[\nabla
Q^n_t](G_t(x)) J(G_t(x)-\nabla P_t^*(G_t(x))).
\end{equation*}
Hence, since $J(G_t(x)-\nabla P_t^*(G_t(x)))=U(G_t(x))$ is uniformly bounded, from
\eqref{eqn:conv-x-fix} we  get
\begin{equation}\label{eqn:deriv-lim}
\begin{split}
\lim_{n\to \infty} \frac{d}{dt} \bigl(Q^n_t(G_t(x))\bigr) &= [\partial_t
\nabla P_t^*](G_t(x)) +  [
\nabla^2 P_t^*](G_t(x))J(G_t(x)-\nabla P_t^*(G_t(x))):=v_t(y)\qquad\text{in $L^1(0,T)$.}
\end{split}
\end{equation}
Recalling that
$$
\lim_{n\to\infty} Q^n_t(G_t(x))=\nabla P_t^*(G_t(x)) =F_t(y) \qquad\forall \,t \in[0,T],
$$
we infer that $F_t(y)$ is absolutely continuous in $[0,T]$ (being the
limit in $W^{1,1}(0,T)$ of absolutely continuous maps). Moreover, by taking
the limit as $n \to \infty$ in the identity
$$
Q^n_t(G_t(x)) = Q^n_0(G_0(x)) + \int_0^t
\frac{d}{d\tau} \bigl(Q^n_\tau(G_\tau(x))\bigr) \, d\tau,
$$
thanks to
\eqref{eqn:deriv-lim} we get
\begin{equation}\label{eqn:f-ac}
\begin{split}
F_t(y) &= F_0(y) + \int_0^t v_\tau(y) \,d \tau.
\end{split}
\end{equation}
To obtain \eqref{def:wl1} we only need to show that
$v_t(y)=u_t(F_t(y))$, which follows at once
from \eqref{eqn:velocity1}, \eqref{eq:def F}, and \eqref{eqn:deriv-lim}.

\end{proof}

\section{Open problems}\label{sect:open pbs}

In this short section we point out some open problems. The first one
is of course uniqueness for the Cauchy problem, both at the level of
\eqref{eqn:dualsystem} and at the level of \eqref{eqn:SGsystem2}.
Let us point out that \emph{a priori} the two problems are not equivalent, because we proved that
solutions to \eqref{eqn:dualsystem} induce solutions to
\eqref{eqn:SGsystem2}, but at the moment the converse implication is only formal
(see the Appendix).

Another open question is the uniqueness of the regular Lagrangian
flow associated to $u$. Uniqueness is known, thanks to the results
in \cite{AM1}, for the flow $G$ in the dual variables with velocity
$U_t(y)=J(y-\nabla P_t^*(y))$; actually, in light of the $L\log^kL$
Sobolev regularity of $U$, even the quantitative stability results
of \cite{crippade1} are by now available for $G$. We were able in
the previous section to prove that flows $G_t$ of $U$ induce flows
$F_t$ of $u$, via the transformation $F_t= \nabla P_t^*\circ G_t\circ\nabla
P_0$. However,
our proof used the boundedness of $U$, an information we do not have
when we try to reverse the implication, namely that regular
Lagrangian flows $F$ of $u$ induce regular Lagrangian flows $G$ of
$U$ via the transformation $G=\nabla P_t\circ F_t\circ\nabla P_0^*$.
This question could be
settled, at least in the class of measure-preserving Lagrangian
flows, if the following conjecture had a positive answer:

\smallskip
\noindent {\bf Conjecture.} {\sl Let $f\in
W^{1,1}((0,T)\times\T^2;\R^2)\cap C([0,T]\times\T^2;\R^2)$, and let $H_t$ be a measure-preserving
Lagrangian flow relative to $b$. Assume that
\begin{equation}\label{cancellation}
[\partial_t f_t](H_t(x))+[\nabla f_t](H_t(x))b_t(H_t(x))\in
L^1(0,T)\qquad\text{for a.e. $x\in\T^2$.}
\end{equation}
Then for a.e. $x\in\T^2$ the map $t\mapsto f_t(H_t(x))$ is
absolutely continuous.} \smallskip

In our case, $f=\nabla P$ and $H_t$ is a measure-preserving flow
associated to $b=u$; with these choices, the term in
\eqref{cancellation} is equal to $U_t(x)$, so it is even bounded,
even though the summands in the expression might be unbounded.

We remark that if we assume that $f\in W^{1,q}$ for some $q>1$, and that
$$
\int_{\T^2}\int_0^T\biggl|\frac{d}{dt}H_t(x)\biggr|^p\,dt\,dx=
\int_{\T^2}\int_0^T\bigl|b_t(x)\bigr|^p\,dt\,dx<\infty, \qquad p=\frac{q}{q-1},
$$
then a simple approximation argument
based on convolving $f$ with smooth convolution kernels,
as the one used in the proof of Theorem \ref{thm:RLF},
provides a positive answer to the above conjecture. (This result
can also be seen
as a particular case of the general theory of weak gradients and absolute
continuity along curves recently developed in \cite{AM3,AM4}. However,
if $f$ is not continuous, one needs to replace $f$
with a suitable ``precise representative'' in its Lebesgue equivalence class.)
Observe that, in this latter case, \eqref{cancellation} is automatically satisfied
by Young inequality.

\appendix
\section{From physical to dual variables}

For completeness, we formally show how the dual equation
\eqref{eqn:dualsystem} is derived from system \eqref{eqn:SGsystem2}.
Taking into account the definition of $P_t$, the identities $J^2 = -
{Id}$, $\nabla p_t(y)+y=\nabla P_t(y)$, $\nabla^2
p_t(y)+{Id}=\nabla^2 P_t(y)$ and the fact that $u_t$ is
divergence-free, for every test function $\varphi$ we obtain
\begin{equation*}
\begin{split}
\frac{d}{dt} \int_{\T^2} \varphi(x)\, d \rho_t(x) &
=\frac{d}{dt}\int_{\T^2}\varphi(\nabla P_t(y))\, dy
=\int_{\T^2} \nabla\varphi(\nabla P_t(y))\cdot \frac{d}{dt} \nabla p_t(y)  \, dy \\
& =  -\int_{\T^2} \nabla\varphi(\nabla P_t(y)) \cdot
\Big\{( \nabla^2p_t(y) +{Id}) u_t(y) -J\nabla p_t(y)\Big\} \, dy \\
& =  -\int_{\T^2} \nabla \big[ \varphi(\nabla P_t(y)) \big] \cdot u_t(y) \, dy
+ \int_{\T^2} \nabla\varphi(\nabla P_t(y)) \cdot  J(\nabla P_t(y)-y) \, dy \\
& =  \int_{\T^2} \nabla\varphi(x) \cdot J( x - \nabla P_t^*(x)) \, d\rho_t(x)
=  \int_{\T^2} \nabla\varphi(x)\cdot U_t(x) \, d\rho_t(x).
\end{split}
\end{equation*}

Notice that this formal derivation holds independently of $u$
(only the divergence-free condition of $u$ is needed), and that $u$
does not appear explicitly in \eqref{eqn:dualsystem}.

\end{document}